\newtheorem{theorem}{Theorem}[section]
\newtheorem{lemma}[theorem]{Lemma}
\newtheorem{proposition}[theorem]{Proposition}
\theoremstyle{definition}
\newtheorem{remark}[theorem]{Remark}
\def\ep{\varepsilon}
\title{\Large{\bf{Periodic solutions of a perturbed Kepler problem in the plane: from existence to stability}}}
\author{Alberto Boscaggin and Rafael Ortega}
\date{}
\begin{document}
\maketitle

\begin{abstract}
The existence of elliptic periodic solutions of a perturbed Kepler problem is proved. The equations are in the plane and the perturbation
depends periodically on time. The proof is based on a local description of the symplectic group in two degrees of freedom.
\end{abstract}

\vspace{0.4cm}
\noindent
\footnotesize{\textbf{AMS-Subject Classification}}. {\footnotesize 37J25, 37J45.}\\
\footnotesize{\textbf{Keywords:} Lyapunov stability, Kepler problem, periodic solution, Poincar\'e coordinates, averaging method, symplectic matrix.} 

\normalsize

\section{Introduction}

Perturbations of the Kepler problem appear naturally in Celestial Mechanics. These equations are relevant for applications
but they also have an intrinsic mathematical intererest. In particular the existence and stability of periodic solutions have
been discussed by a large number of authors. After Poincar\'e these questions are usually treated via the averaging method.
We refer to the papers \cite{Mos70,Yan08} for results on the autonomous case and for useful lists of references.
We will be interested in periodic time dependent perturbations, a class of equations already considered by Fatou in \cite{Fa28}.
More recently Ambrosetti and Coti Zelati treated in \cite{AmbCot89} a class of periodic perturbations with symmetries and presented
the averaging method in a variational framework.

We are going to consider the perturbed Kepler problem in the plane
\begin{equation}\label{pk}
\ddot x = -\frac{x}{\vert  x \vert^3} + \ep \, \nabla_x U(t,x), \qquad x \in \mathbb{R}^2 \setminus \{0\},
\end{equation}
where $\ep$ is a small parameter and $U $ is a smooth function with period $2\pi$ in the variable $t$. In principle $U$ could also depend
on $\ep$ but we have eliminated this dependence for simplicity.
We are interested in the stability properties of $2\pi$-periodic solutions obtained as a continuation from the integrable case
$\ep = 0$. These solutions are understood without collisions and so the restriction to an interval of length $2\pi$, say
$x: [0,2\pi] \to \mathbb{R}^2 \setminus \{0\}$, defines a loop in $\mathbb{R}^2 \setminus \{0\}$. In particular, each $2\pi$-periodic solution has a winding number
$N \in \mathbb{Z}$.
For $\ep = 0$ the system has $2\pi$-periodic solutions with any winding number $N \neq 0$. They are produced by the elliptic (or circular)
orbits with major half-axis $a_N = \vert N \vert^{-2/3}$. This quantity appears as a consequence of Kepler's third law when we look for 
solutions with minimal period $\tfrac{2\pi}{\vert N \vert}$. The sign of $N$ corresponds to the orientation of the orbit. Let 
$\Sigma_N$ denote the set of initial conditions $(x_0,y_0)$ in 
$\left(\mathbb{R}^2 \setminus \{0\}\right) \times \mathbb{R}^2$ producing $2\pi$-periodic solution with winding number $N \neq 0$.
In view of the relationship between the energy and the major axis of a Keplerian ellipse we can describe $\Sigma_N$ by the equations
$$
\frac{1}{2} \vert y_0 \vert^2 - \frac{1}{\vert x_0 \vert} = - \frac{1}{2} N^{2/3}, \qquad N \left(x_{01}y_{02}- x_{02}\,y_{01}\right) > 0.
$$ 
Later we shall see that $\Sigma_N$ is a three dimensional manifold diffeomorphic to $\mathbb{S}^1 \times \mathbb{D}$, where 
$\mathbb{D}$ is the unit open disk.

Let $\phi_t(x_0,y_0)=(x(t;x_0 ,y_0 ),y(t;x_0 ,y_0 ))$ be the flow associated to the the Kepler problem
$$
\dot x = y, \qquad \dot y  = -\frac{x}{\vert x \vert^3}.
$$
Then $\Sigma_N$ is invariant under $\phi_t$ and we can average $U(t,x)$ with respect to the flow over the manifold $\Sigma_N$ to obtain the function
$$
\Gamma_N: \Sigma_N \to \mathbb{R}, \qquad \Gamma_N(x_0,y_0) = \frac{1}{2\pi}\int_0^{2\pi} U\left( t,x(t;x_0 ,y_0 )\right)\,dt.
$$
Any continuation from $\ep = 0$ of $2\pi$-periodic solutions with winding number $N$ must emanate from the set of critical points of $\Gamma_N$.
Conversely, if the critical point satisfies some non-degeneracy condition, such a continuation always exists. This type of result can be obtained using an abstract
variational framework as in \cite{AmbCotEke87} or by a more traditional averaging method. We will follow the second alternative and then it is convenient
to employ a system of coordinates which is natural to the equation for $\ep =0$. Since the Kepler problem is integrable
the action-angle variables seem a natural choice (see \cite{MeyHalOff}). These are the well-known Delaunay variables and they work well for the continuation from positive eccentricity.
 However these coordinates present a blow-up at eccentricity $e = 0$ and they  do not seem suitable to deal with the continuation from circular solutions. Poincar\'{e} proposed
 a variant of the Delaunay variables which solves this difficulty (see \cite{Poi05} and \cite{Fej13}). They are of the form
$(\lambda,\Lambda,\eta ,\xi
)$ with $\lambda \in \mathbb{S}^1$, $\Lambda > 0$ and 
$\eta^2 + \xi^2 < 2\Lambda$.
The sub-manifold $\Sigma_N$ has the simple equation $\Lambda = \vert N \vert^{-2/3}$ and $(\lambda,\eta,\xi)$ are a one-to-one parameterization of
$\Sigma_N$. Poincar\'{e} coordinates are probably the most natural coordinates to deal with the continuation problem \eqref{pk} and they have been already employed in \cite{Yan08}.
In this paper we will show that they are also very useful in the study of the properties of stability. After expressing the function
$\Gamma_N$ in terms of these coordinates, 
$\gamma_N = \gamma_N(\lambda,\eta,\xi)$, we will prove the following 

\begin{theorem}\label{st}
Assume that $\left(\lambda^*,\eta^*,\xi^*\right)$ is a non-degenerate critical point of
$\gamma_N$, i.e., $\nabla \gamma_N = 0$, $\textnormal{det}\left(D^2 \gamma_N \right) \neq 0$
at $\left(\lambda^*,\eta^*,\xi^*\right)$. Then, for any small $\ep > 0$ the bifurcating solution
$x_\ep(t)$ is elliptic if 
$$
\partial^2_{\lambda\lambda} \gamma_N \left(\lambda^*,\eta^*,\xi^*\right) > 0
\quad \mbox{ and } \quad
\textnormal{det}\,D^2 \gamma_N\left(\lambda^*,\eta^*,\xi^*\right) > 0.
$$ 
Conversely, if $\partial^2_{\lambda\lambda} \gamma_N \left(\lambda^*,\eta^*,\xi^*\right)< 0$ or $\textnormal{det}\,D^2 \gamma_N\left(\lambda^*,\eta^*,\xi^*\right) < 0$
then $x_\ep (t)$ is unstable in the Lyapunov sense.
\end{theorem}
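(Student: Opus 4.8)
I would prove this by reducing, via Poincaré coordinates, to a local eigenvalue analysis of a symplectic $4\times4$ matrix.

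\medskip
\noindent\emph{Setup and normal form.} In Poincaré coordinates $(\lambda,\Lambda,\eta,\xi)$ the Kepler part of the Hamiltonian is a function $K(\Lambda)$ of the action alone, with $\Lambda_N:=\vert N\vert^{-2/3}$ satisfying $K'(\Lambda_N)\in\Z$ (this integrality is forced by the fact that every orbit on $\Sigma_N=\{\Lambda=\Lambda_N\}$ is $2\pi$-periodic while $\lambda$ is a circle variable) and $\kappa:=K''(\Lambda_N)\neq 0$, and the full system has Hamiltonian $\mathcal H=K(\Lambda)+\ep\,\mathcal U(\lambda,\Lambda,\eta,\xi,t)$ with $\mathcal U(\,\cdot\,,\Lambda_N,\,\cdot\,,\,\cdot\,,t)$ averaging to $\gamma_N$. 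Since $\Sigma_N$ is a resonant torus I would first enter the co-rotating frame $\mu=\lambda-K'(\Lambda_N)t$ (a well-defined $2\pi$-periodic map of the circle, because $K'(\Lambda_N)\in\Z$), so that near $\Sigma_N$ the unperturbed flow reads $\dot\mu=\kappa(\Lambda-\Lambda_N)+O((\Lambda-\Lambda_N)^2)$, $\dot\Lambda=\dot\eta=\dot\xi=0$; then I would carry out a \emph{second-order} near-resonance normalization, i.e.\ a $2\pi$-periodic-in-$t$, $O(\ep)$-close-to-identity symplectic change of variables bringing $\mathcal H$ to $H_{\mathrm{tr}}+\ep^{3}R(\,\cdot\,,t)$ with $H_{\mathrm{tr}}=K(\Lambda)-K'(\Lambda_N)\Lambda+\ep h_1+\ep^2 h_2$ autonomous, $h_1|_{\Sigma_N}=\gamma_N$, and $R$ bounded. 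In these coordinates $x_\ep$ — whose existence is already granted by the averaging method — is a $2\pi$-periodic trajectory which, by the implicit function theorem applied with the non-degeneracy $\det D^2\gamma_N\neq0$, sits within $O(\ep^{3})$ of the equilibrium of $H_{\mathrm{tr}}$ at $(\lambda^*,\Lambda_N+O(\ep),\eta^*,\xi^*)$.

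\medskip
\noindent\emph{The monodromy matrix.} The stability type of $x_\ep$ is that of its monodromy matrix $M_\ep\in Sp(4,\R)$. Because $h_1,h_2$ are autonomous, $R=O(\ep^3)$ and $x_\ep$ is $O(\ep^3)$-close to the equilibrium, $M_\ep$ is conjugate to $\exp\!\big(2\pi J\,D^2H_{\mathrm{tr}}\big)+E_\ep$ with $E_\ep=O(\ep^3)$, where $D^2H_{\mathrm{tr}}$ is evaluated at the equilibrium and $J$ is the standard complex structure. The decisive structural point is that this Hessian equals $\kappa$ in the $(\Lambda,\Lambda)$ slot plus an $O(\ep)$ block whose $\ep$-leading part is $D^2\gamma_N$ in the variables $(\lambda,\eta,\xi)$; hence $\det\big(D^2H_{\mathrm{tr}}\big)=\kappa\,\ep^{3}\det(D^2\gamma_N)+O(\ep^{4})$ and $\mathrm{tr}\big((J\,D^2H_{\mathrm{tr}})^2\big)=-2\kappa\,\ep\,\partial^2_{\lambda\lambda}\gamma_N+O(\ep^{2})$, the $\Lambda$-derivatives of $h_1,h_2$ entering only at lower order. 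Writing the (reciprocal) characteristic polynomial of $M_\ep$ as $z^4-Tz^3+Bz^2-Tz+1$, this gives
\[
T-4=-4\pi^2\kappa\,\ep\,\partial^2_{\lambda\lambda}\gamma_N+O(\ep^{2}),\qquad 2-2T+B=(2\pi)^4\kappa\,\ep^{3}\det(D^2\gamma_N)+o(\ep^{3}).
\]
That $E_\ep$ and the unwanted terms contribute only $o(\ep^3)$ to the invariant $2-2T+B$ will use that $M_\ep$ and $\exp(2\pi J D^2H_{\mathrm{tr}})$ are both symplectic, so that the error, measured against $\exp(2\pi J D^2H_{\mathrm{tr}})$, is infinitesimally symplectic and hence \emph{traceless} — which is exactly what neutralises it for $T$ and $B$.

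\medskip
\noindent\emph{The $Sp(4,\R)$ dichotomy.} With $w=z+z^{-1}$ the eigenvalues of $M_\ep$ lie on the unit circle precisely when the two roots $w_\pm=2+\delta_\pm$ of $w^2-Tw+(B-2)=0$ are real and lie in $(-2,2)$; here $\delta_++\delta_-=T-4$ and $\delta_+\delta_-=2-2T+B$. For small $\ep>0$ the asymptotics above give $(\delta_+-\delta_-)^2=(T-4)^2-4(2-2T+B)\sim(4\pi^2\kappa\,\ep\,\partial^2_{\lambda\lambda}\gamma_N)^2>0$ whenever $\partial^2_{\lambda\lambda}\gamma_N\neq0$, so $\delta_\pm$ are real and distinct, and since $\delta_\pm\to0$ they are automatically $>-4$. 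Therefore $M_\ep$ is elliptic (all four multipliers on the unit circle, simple, $\neq\pm1$) exactly when $\delta_++\delta_-<0$ and $\delta_+\delta_->0$, i.e.\ when $\kappa\,\partial^2_{\lambda\lambda}\gamma_N>0$ and $\kappa\,\det D^2\gamma_N>0$; since the Kepler twist $\kappa=K''(\Lambda_N)$ is positive in Poincaré coordinates (as the explicit formulas show), this is exactly the stated condition $\partial^2_{\lambda\lambda}\gamma_N>0$ and $\det D^2\gamma_N>0$. Conversely, if $\partial^2_{\lambda\lambda}\gamma_N<0$ then $\delta_++\delta_->0$ forces some $\delta_\pm>0$, hence some $w>2$ and a real multiplier $>1$; and if $\det D^2\gamma_N<0$ then $\delta_+\delta_-<0$, which again produces $w>2$. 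In either case $M_\ep$ has a multiplier off the unit circle, so the return map is hyperbolic along $x_\ep$ and $x_\ep$ is Lyapunov unstable.

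\medskip
\noindent\emph{Main obstacle.} The hard part is the second-order normal-form bookkeeping together with the verification that $2-2T+B$ is, to leading order, a clean positive multiple of $\kappa\,\det D^2\gamma_N$: one must check simultaneously that the spurious $\Lambda$-derivatives of $h_1,h_2$ affect $\det(D^2H_{\mathrm{tr}})$ only at $O(\ep^4)$, and that the $O(\ep^3)$ errors coming from $R$ and from the $O(\ep^3)$ displacement of $x_\ep$ off the equilibrium perturb $2-2T+B$ only by $o(\ep^3)$ — the latter being precisely where one exploits that such errors are infinitesimally symplectic and therefore traceless. Subsidiary points are pinning down the sign of $\kappa$ from the Poincaré expressions, and noting that ``elliptic'' here is the linear statement (all Floquet multipliers on the unit circle and simple); upgrading it to genuine Lyapunov stability would additionally require a twist/Moser argument for the $4$-dimensional symplectic return map, which the present statement does not demand.
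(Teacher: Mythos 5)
Your overall skeleton is the same as the paper's: produce the expansions $\textnormal{tr}\,S(\ep)-4=c_1\ep+o(\ep)$ with $c_1$ proportional to $h''(\Lambda_N)\,\partial^2_{\lambda\lambda}\gamma_N$ and $\textnormal{det}\,(S(\ep)-I)=c_3\ep^3+o(\ep^3)$ with $c_3$ proportional to $-h''(\Lambda_N)\,\textnormal{det}\,D^2\gamma_N$, and then show that near the totally degenerate monodromy matrix $P_*$ these two invariants decide ellipticity versus instability. Where you genuinely differ is in the second step: the paper proves a structural local result (Proposition \ref{discri}, via the invariant Lagrangian splitting Lemmas \ref{two}--\ref{key}) saying that near $P_*$ the spectrum cannot leave $\mathbb{S}^1\cup(\R\setminus\{0\})$, whereas you confine the spectrum by the reciprocal-polynomial reduction $w=z+z^{-1}$ and the observation that the discriminant $(T-4)^2-4\,\textnormal{det}(S-I)\sim c_1^2\ep^2>0$ once $\partial^2_{\lambda\lambda}\gamma_N\neq0$, which forces both $w$-roots to be real. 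That is a legitimate and more elementary route for this theorem (the ellipticity case assumes $\partial^2_{\lambda\lambda}\gamma_N>0$, and the instability cases do not need the dichotomy), though it is weaker than Proposition \ref{discri}, which requires no condition on the trace.

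There are, however, two concrete problems. First, signs: for equation \eqref{pk} the Hamiltonian is $K(\Lambda)-\ep\,\mathcal U$ (the force is $+\ep\nabla_xU$), not $K(\Lambda)+\ep\,\mathcal U$, and the twist is $\kappa=K''(\Lambda_N)=-3\Lambda_N^{-4}<0$, not positive as you assert. As written you make two compensating sign slips (the flipped perturbation sign and the false claim $\kappa>0$) and land on the stated inequalities by accident; since the entire content of the theorem is a pair of sign conditions, this bookkeeping must be redone consistently (with the correct convention one gets $\textnormal{tr}\,S-4=4\pi^2\kappa\,\ep\,\partial^2_{\lambda\lambda}\gamma_N+o(\ep)$ and $\textnormal{det}(S-I)=-(2\pi)^4\kappa\,\ep^3\,\textnormal{det}\,D^2\gamma_N+o(\ep^3)$, and $\kappa<0$ then gives exactly the theorem's criteria). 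Second, your error control is not a proof as it stands: the difference of two symplectic matrices is neither infinitesimally symplectic nor traceless in any sense that would protect the invariant $2-2T+B$, and the claim that $x_\ep$ lies within $O(\ep^3)$ of the normal-form equilibrium is doubtful because the linearized fixed-point equation is nearly degenerate ($\textnormal{det}(S(\ep)-I)\sim\ep^3$), so the implicit function theorem does not give that rate. The clean mechanisms are either the paper's: compute $\partial_\ep$ of the Poincar\'e map directly (only first-order averaging is needed, no second-order normal form), or, if you keep your normal-form route, the rank-one structure of $P_*-I$: every $3\times3$ cofactor of $S(\ep)-I$ is $O(\ep^2)$, so even $O(\ep^2)$ errors in the monodromy perturb $\textnormal{det}(S(\ep)-I)$ only at $o(\ep^3)$ and the trace at $o(\ep)$. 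With those two repairs your argument goes through; your closing remark that ellipticity is only a linear statement and does not yet give Lyapunov stability agrees with the paper.
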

We say that a periodic solution $x_{\ep} (t)$ is elliptic if all Floquet multipliers $\mu_i$ satisfy $|\mu_i |=1$, $\mu_i \neq \pm 1$. These multipliers
are the eigenvalues of the monodromy matrix of the linearized periodic system 
\begin{equation}\label{variational}
\ddot{y} +D_x^2 W(t,x_{\ep} (t);\ep)y=0, \qquad W(t,x;\ep)=-\frac{1}{|x|}-\ep U(t,x).
\end{equation}
It must be noticed that the condition of ellipticity is not sufficient to guarantee the stability in the Lyapunov sense of the periodic solution. Our result
can be seen as a first step towards the solution of this delicate stability problem. Typically the known results on linearized stability via the averaging method assume that some of
the multipliers of the unperturbed system are different from $\pm 1$, see for instance \cite{AmbCotEke87} and \cite{Yan08}. However it is well known that the only multiplier of 
the periodic solutions of Kepler problem is $1$.
Indeed the monodromy matrix $P_*$ is given by 
\begin{equation}\label{partialtor}
P_*  =  \left( \begin{array}{cc} I_2 & T \\ 0 & I_2 \end{array}\right), \qquad T =  \left( \begin{array}{cc} \tau & 0 \\ 0 & 0 \end{array}\right), \quad \tau \neq 0.
\end{equation}
To overcome this difficulty with the multipliers we will observe that the matrix $P_*$
enjoys the following symplectic property. Let $\textnormal{Sp}\,(\mathbb{R}^4 )$ be the symplectic group of $4\times 4$ matrices,
then there exists a neighborhood ${\cal U} \subset \textnormal{Sp}\,(\mathbb{R}^4 )$ of $P_*$ such that the spectrum of every matrix $S\in {\cal U}$ satisfies
$$\sigma (S)\subset (\mathbb{R} \setminus \{ 0\} )\cup \mathbb{S}^1 .$$
In other words, the eigenvalues of $S$ can only lie on the real line or on the unit circle. This is an important fact because the monodromy matrix of the
system (\ref{variational}) lies in ${\cal U}$ for small $\ep$ and so the Floquet multipliers can only get out of the unit circle through the real line.
This fact will play a role in the proof of Theorem \ref{st}. Incidentally we note that the previous discussion explains why we work on the plane and not on in 3d.

In practice the conditions of the theorem are not easy to check because the solutions of the Kepler problem
are implicitly defined by the Kepler equation $u-e\sin u =\ell$. In the circular case $u=\ell$ and these solutions are just trigonometric functions and this will allow us
to compute the second derivatives of the Poincar\'e coordinates at $\eta =\xi =0$. In this way we will be led to fully explicit results in the circular case.
For instance,
given $a \in \mathbb{C}$, the forced Kepler problem
$$
\ddot z = - \frac{z}{\vert z \vert^3} - \ep \left( e^{it} + a e^{-it}\right), \qquad z\in \mathbb{C} \setminus \{ 0\},
$$
has two families of $2\pi$-periodic solutions emanating respectively from $e^{it}$ and $-e^{it}$ if  $\vert a \vert \neq 4$. Moreover both families are unstable if $\vert a \vert > 4$ while the first family is elliptic
and the second is unstable if $\vert a \vert < 4$.  
 

The rest of the paper is divided in six sections. Section \ref{s2} deals with certain properties of $4\times 4$ symplectic matrices. The Poincar\'e coordinates are
reviewed in Section \ref{s3}. The next Section \ref{s4} deals with a class of degenerate Hamiltonian systems inspired by the Kepler problem, the typical computations
in the averaging method are presented. The proof of Theorem \ref{st} is presented in Section \ref{s5} and Section \ref{s6} contains the application of the Theorem to the circular case.
Finally we add an appendix with the computations of the derivatives of the Poincar\'e coordinates.

\section{A property of symplectic matrices}\label{s2}

The symplectic group $\textnormal{Sp}\,(\mathbb{R}^{2d})$ is composed by the real matrices $S$ of dimension $(2d) \times (2d)$ satisfying
$$
S^t J S = J,
$$
where $S^t$ is the transpose of $S$ and 
$J = \left( \begin{array}{cc} 0 & I_d \\ - I_d & 0 \end{array}\right)$.

The spectrum of $S$ will be denoted by $\sigma(S)$. It is well known that for symplectic matrices the spectrum is invariant under inversion.
This means that if $\mu$ is an eigenvalue of $S$ then $\mu \neq 0$ and also $\tfrac{1}{\mu}$ is an eigenvalue.

A matrix $S \in \textnormal{Sp}\,(\mathbb{R}^{2d})$ is called \emph{elliptic} if every eigenvalue $\mu \in \sigma(S)$ satisfies 
$\vert \mu \vert = 1$, $\mu \neq \pm 1$. In one degree of freedom ($d=1$) elliptic matrices are easily characterized in terms of the trace. More precisely, 
$S \in \textnormal{Sp}\,(\mathbb{R}^{2})$ is elliptic whenever $\vert \textnormal{tr}\, S \vert < 2$. This property is very useful when dealing with stability problems in the plane. Next we are going to present an analogous criterion in two degrees of freedom. In this case the characterization of elliptic matrices will have a local nature since it will  hold in a neighborhood of a matrix $P$ with the following properties
\begin{equation}\label{parab}
P \in \textnormal{Sp}\,(\mathbb{R}^4), \quad P \neq I, \quad \textnormal{dim}\,\textnormal{ker}\,(P-I) \neq 2, \quad \sigma(P) = \{1\}.
\end{equation}
As an example consider the matrix $P_*$ previously defined by (\ref{partialtor}).

\begin{proposition}\label{discri}
Assume that $P$ is a matrix satisfying \eqref{parab}. Then there exists a neighborhood $\mathcal{U} \subset \textnormal{Sp}\,(\mathbb{R}^4)$
of $P$ such that a matrix $S \in \mathcal{U}$ is elliptic if and only if the following conditions hold
\begin{equation}\label{trace}
\textnormal{det}\,(S-I) > 0 \quad \mbox{ and } \quad \textnormal{tr}\,S < 4.
\end{equation}
\end{proposition}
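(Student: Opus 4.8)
The plan is to translate everything into the characteristic polynomial of $S$ and then reduce to an elementary root‑location problem for a real quadratic; the only substantial ingredient is the ``no spectrum off $\mathbb{R}\cup\mathbb{S}^1$'' phenomenon announced in the introduction. First I would use that a symplectic $S$ has palindromic characteristic polynomial $p_S(\lambda)=\lambda^4-c_1\lambda^3+c_2\lambda^2-c_1\lambda+1$, with $c_1=\textnormal{tr}\,S$ and $c_2=\tfrac12\bigl((\textnormal{tr}\,S)^2-\textnormal{tr}(S^2)\bigr)$; dividing by $\lambda^2$ and setting $\omega=\lambda+\lambda^{-1}$ reduces the spectral problem to the quadratic $q_S(\omega):=\omega^2-c_1\omega+(c_2-2)$, whose two roots $\omega_1,\omega_2$ determine $\sigma(S)$ through $\lambda+\lambda^{-1}=\omega_j$. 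I would record the identities $q_S(2)=\textnormal{det}(S-I)$, $q_S(-2)=\textnormal{det}(S+I)$, $\omega_1+\omega_2=\textnormal{tr}\,S$, and the fact that $S$ is elliptic exactly when $\omega_1,\omega_2$ are real and lie in the open interval $(-2,2)$. Since all eigenvalues of $P$ equal $1$ one has $q_P(\omega)=(\omega-2)^2$, i.e.\ $\textnormal{tr}\,P=4$ and $c_2(P)=6$; in particular $P$ lies on the discriminant locus $\{(\textnormal{tr}\,S)^2=4(c_2-2)\}$, which is why the criterion can only be a local one.

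The heart of the proof is to show that there is a neighborhood $\mathcal{U}_0\subset\textnormal{Sp}\,(\mathbb{R}^4)$ of $P$ on which the discriminant $\Delta(S):=(\textnormal{tr}\,S)^2-4\bigl(c_2(S)-2\bigr)$ stays nonnegative; by the previous paragraph this is the same as $\sigma(S)\subset(\mathbb{R}\setminus\{0\})\cup\mathbb{S}^1$, the assertion quoted in the introduction. Since $\Delta(P)=0$, the claim is that $\Delta$, restricted to the manifold $\textnormal{Sp}\,(\mathbb{R}^4)$, attains a local minimum at $P$, and I would prove it by a second‑order computation. Parametrize a neighborhood of $P$ by $S=P\exp(JB)$ with $B$ a small symmetric matrix --- this is a local diffeomorphism, since $B\mapsto PJB$ maps the symmetric matrices isomorphically onto $T_P\textnormal{Sp}\,(\mathbb{R}^4)$ --- and expand $\textnormal{tr}\,S$ and $c_2(S)$ to second order in $B$. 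The hypotheses \eqref{parab} constrain the Jordan type of $P$ (the matrix $P_*$ of \eqref{partialtor} being the basic model, to which one reduces by a symplectic change of variables), and it is precisely the condition $\textnormal{dim}\,\textnormal{ker}\,(P-I)\ne2$ that makes the linear term of $\Delta$ at $P$ vanish --- equivalently, $dc_2=2\,d(\textnormal{tr})$ at $P$ --- and the quadratic term a positive semidefinite form in the entries of $B$. Proving that this quadratic form is positive semidefinite is the main obstacle of the argument, and it is the point where two degrees of freedom are essential rather than three. (Alternatively this step follows from symplectic eigenvalue perturbation theory: the Krein signature of the Jordan block of $P$ forbids eigenvalues to leave the unit circle except through $\pm1$.)

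Granting this, the rest is elementary. Fix $S$ in a small enough ball $\mathcal{U}\subset\mathcal{U}_0$ about $P$. By the previous step the roots $\omega_1\le\omega_2$ of $q_S$ are real and depend continuously on $S$, and they equal $2$ at $P$, so after shrinking $\mathcal{U}$ we may assume $\omega_1,\omega_2>-2$; hence $S$ is elliptic if and only if $\omega_2<2$, i.e.\ both roots of the upward parabola $q_S$ lie strictly below $2$. For an upward parabola this holds precisely when $q_S(2)>0$ and the vertex lies to the left of $2$, i.e.\ $\tfrac12(\omega_1+\omega_2)<2$: if both roots are below $2$ then both factors of $q_S(2)=(2-\omega_1)(2-\omega_2)$ are positive and $\omega_1+\omega_2<4$; conversely $q_S(2)>0$ forces $2$ outside $[\omega_1,\omega_2]$ and the vertex condition places it to the right of that interval. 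Translating through the identities of the first paragraph, this reads $\textnormal{det}(S-I)>0$ and $\textnormal{tr}\,S<4$, which is exactly \eqref{trace}. (The excluded cases are consistent with the statement: if $q_S(2)\le0$ then $S$ has the eigenvalue $1$ or a real eigenvalue larger than $1$; if $q_S(2)>0$ but $\textnormal{tr}\,S\ge4$ then, the roots being near $2$, $\omega_2\ge2$; in either case $S$ is not elliptic.)
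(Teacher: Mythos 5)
The elementary half of your argument is correct, and it is essentially the same as the paper's: your $\omega_1,\omega_2$ are exactly the quantities $\Delta_1,\Delta_2$ used there, your identity $q_S(2)=\textnormal{det}(S-I)$ is the paper's $\textnormal{det}(S-I)=(2-\Delta_1)(2-\Delta_2)$, and the root-location discussion at the end reproduces the concluding lines of the paper's proof. But this bookkeeping is not where the content of Proposition \ref{discri} lies. Everything hinges on the confinement statement $\sigma(S)\subset\mathbb{S}^1\cup(\mathbb{R}\setminus\{0\})$ for all symplectic $S$ near $P$ (the paper's Lemma \ref{key}, equivalently $\Delta(S)\ge 0$ near $P$), and you do not prove it: you yourself call the positive semidefiniteness of the relevant quadratic form ``the main obstacle of the argument'' and leave it, together with the parenthetical appeal to Krein signature, unestablished. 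So the proposal has a genuine gap precisely at the only nontrivial step.

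Moreover, the route you sketch for that step cannot work as described. Even granting a positive semidefinite Hessian of $\Delta$ on $\textnormal{Sp}\,(\mathbb{R}^4)$ at $P$, semidefiniteness of a degenerate form does not yield a local minimum, and the form is necessarily degenerate here ($\Delta$ vanishes, for instance, along the unipotent one-parameter subgroup through $P$). More decisively, no $2$-jet computation can distinguish the admissible Jordan types from the excluded one: for the matrix $P$ with $\textnormal{dim}\,\textnormal{ker}\,(P-I)=2$ in the paper's first Remark, the explicit family satisfies $\Vert S_\ep-P\Vert=O(\ep)$ while
$\Delta(S_\ep)=-(\omega_1-\omega_2)^2\cdot(-1)=-4\,\ep^6/(1+\ep^2)^2$,
so along that curve the first and second (indeed up to fifth) order terms of $\Delta$ vanish and the failure of nonnegativity only appears at order six; in fact the linear term of $\Delta$ at $P$ vanishes whenever $(P-I)^2=0$, irrespective of $\textnormal{dim}\,\textnormal{ker}\,(P-I)$. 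Hence your claim that ``it is precisely $\textnormal{dim}\,\textnormal{ker}\,(P-I)\neq 2$ that makes the linear term vanish and the quadratic term semidefinite'' cannot be the mechanism, and the hypothesis must enter structurally rather than through a low-order expansion. The Krein-signature fallback is also not available off the shelf, since all eigenvalues of $P$ sit at $1$, where the classical ``definite pairs cannot leave the circle'' argument does not apply; ruling out quadruples bifurcating from the eigenvalue $1$ is exactly the issue. The paper's proof supplies the missing idea in a different, qualitative way: any nearby symplectic matrix with an eigenvalue off $\mathbb{R}\cup\mathbb{S}^1$ carries a splitting of $\mathbb{R}^4$ by invariant Lagrangian planes; such splittings persist under limits (Lemma \ref{new}); and a symplectic matrix with eigenvalue $1$ admitting an invariant Lagrangian splitting is either $I$ or has a two-dimensional fixed space (Lemma \ref{two}, via the block form $\textnormal{diag}(\beta^t,\beta^{-1})$), which \eqref{parab} forbids. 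Some argument of this kind, in which $\textnormal{dim}\,\textnormal{ker}\,(P-I)\neq 2$ is used in an essential algebraic way, is what your proposal is missing.
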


\begin{remark}
The condition $\textnormal{dim}\,\textnormal{ker}\,(P-I) \neq 2$ is essential. To show this we prove that the above Proposition does not hold if
$$
P =  \left( \begin{array}{cc} \beta^t & 0 \\ 0 & \beta^{-1} \end{array}\right), \qquad \textnormal{ with } \quad \beta =  \left( \begin{array}{cc} 1 & 0 \\ -1 & 1 \end{array}\right).
$$
The matrix $P$ satisfies all the conditions of assumption \eqref{parab} excepting that $\textnormal{dim}\,\textnormal{ker}\,(P-I) = 2$.

For each $\ep \neq 0$ consider the matrix $S_\ep = Q_\ep B_\ep Q_\ep^{-1}$ with
$$
Q_\ep = \left( \begin{array}{cc} A_\ep & M \\ 0 & A_\ep^{-1} \end{array}\right), \qquad A_\ep =  \left( \begin{array}{cc} 1 & 0 \\ 0 & \ep \end{array}\right), \quad M =  \left( \begin{array}{cc} 0 & 0 \\ 0 & 1 \end{array}\right)
$$
and
$$
B_\ep = \left( \begin{array}{cc} C_\ep^t & 0 \\ 0 & C_\ep^{-1} \end{array}\right), \qquad C_\ep =  \left( \begin{array}{cc} 1 & \ep \\ -\ep & 1 \end{array}\right).
$$
The matrices $Q_\ep$ and $B_\ep$ belong to $\textnormal{Sp}\,(\mathbb{R}^4)$ and the inverse of $Q_\ep$ is given by
$Q_\ep^{-1} = \left( \begin{array}{cc} A_\ep^{-1} & -M \\ 0 & A_\ep \end{array}\right)$.
Note that $M$ and $A_\ep$ commute. The spectrum of $S_\ep$ is
$$
\sigma(S_\ep) = \sigma(B_\ep)  = \{ 1 \pm i\ep,\, (1 \pm i\ep)^{-1}\}
$$
and so $S_\ep$ is not elliptic. Moreover 
$\textnormal{det}\,(S_\ep-I) = \textnormal{det}\,(B_\ep-I)  > 0$ and
$\textnormal{tr}\,S_\ep = \textnormal{tr}\,B_\ep < 4$. Therefore the equivalence between ellipticity and \eqref{trace} is broken for $S_\ep$.
Finally we observe that
$$
S_\ep = \left( \begin{array}{cc}\vspace{0.1cm} A_\ep C_\ep^t A_\ep^{-1} & -A_\ep C_\ep^t M + M C_\ep^{-1} A_\ep \\ 0 & A_\ep^{-1} C_\ep^{-1}A_\ep \end{array}\right)
\to P \quad \mbox{ as } \ep \to 0.
$$
\end{remark}

\begin{remark}
A matrix $S \in \textnormal{Sp}\,(\mathbb{R}^{2d})$ is called stable if the sequence $\{ \Vert S^n \Vert\}_{n \in \mathbb{Z}}$ is bounded. Here $\Vert \cdot \Vert$ denotes
 any matrix norm. Most elliptic matrices are stable but there are exceptions if $d \geq 2$. Actually an elliptic matrix is stable if and only if it can be diagonalized. 
 Unstable elliptic matrices can appear in the neighborhood $\mathcal{U}$ given by Proposition 
\ref{discri}. We illustrate this fact with an example concerning the matrix $P_*$ given by \eqref{partialtor}. The notations introduced in the previous remark remain valid. Consider
$$
E_\ep = \left( \begin{array}{cc} R_\ep & \mathcal{T} \\ 0 & R_\ep \end{array}\right)
$$
where
$$
R_\ep =  \left( \begin{array}{cc} \vspace{0.1cm}\cos(\ep^2) & -\sin(\ep^2) \\ \sin(\ep^2) & \cos(\ep^2) \end{array}\right) \quad \mbox{ and }
\quad \mathcal{T} =  \left( \begin{array}{cc} \tau & 0 \\ 0 & -\tau \end{array}\right)
$$ 
with $\ep > 0$ and $\tau \neq 0$. The matrix $E_\ep$ belongs to $\textnormal{Sp}\,(\mathbb{R}^4)$. Moreover it is elliptic if $\ep^2 < \pi$ and it cannot be diagonalized. Hence the same properties hold for $\mathcal{E}_\ep = Q_\ep E_\ep Q_\ep^{-1}$. The matrix $\mathcal{E}_\ep$ converges to $P_*$
as $\ep \to 0$. 
\end{remark}

To prepare the proof of Proposition \ref{discri} we need to introduce some terminology coming from Symplectic Geometry. Given two vectors $\xi,\eta \in \mathbb{R}^4$ we consider the symplectic form acting on them
$$
\omega(\xi,\eta) = \xi^t J \eta.
$$
Sometimes the same form will act on vectors lying in $\mathbb{C}^4$. A Lagrangian plane is a linear manifold $V \subset \mathbb{R}^4$ of dimension two such that the symplectic form vanishes on $V$, that is
$$
\omega(\xi,\eta) = 0 \quad \mbox{ for each } \, \xi,\eta \in V.
$$
The planes $V = \{ \xi \in \mathbb{R}^4 :\; \xi_3 = \xi_4 = 0\} $ and $W = \{ \xi \in \mathbb{R}^4 :\; \xi_1 = \xi_2 = 0\}$ are Lagrangian. The couple $(V,W)$ is an example of a Lagrangian splitting of the space since
$\mathbb{R}^4 = V \oplus W$. Next we present an algebraic result where this notion plays a role.

\begin{lemma}\label{two}
Assume that $1$ is an eigenvalue of the matrix $S \in \textnormal{Sp}\,(\mathbb{R}^4)$ and there is a splitting by invariant Lagrangian planes, that is, there exist two Lagrangian planes $V$ and $W$ satisfying
$$
V \oplus W = \mathbb{R}^4, \quad S(V) = V, \quad S(W) = W.
$$
Then either $S = I$ or $\textnormal{dim}\,\textnormal{ker}\,(S-I) = 2$.
\end{lemma}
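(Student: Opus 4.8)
The plan is to exploit the invariant Lagrangian splitting to bring $S$ into a block-diagonal pair of $2\times 2$ blocks which are mutually ``transpose-inverse'', and then simply count the geometric multiplicity of the eigenvalue $1$.

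First I would note that the restriction of $\omega$ to $V\times W$ is a nondegenerate pairing: if $v\in V$ satisfies $\omega(v,w)=0$ for all $w\in W$, then, since $V$ is Lagrangian, we also have $\omega(v,v')=0$ for all $v'\in V$, so $\omega(v,\cdot)$ vanishes on $\mathbb{R}^4=V\oplus W$ and hence $v=0$. Therefore one can choose a basis $e_1,e_2$ of $V$ together with the uniquely determined basis $f_1,f_2$ of $W$ satisfying $\omega(e_i,f_j)=\delta_{ij}$. Because $V$ and $W$ are Lagrangian, the Gram matrix of $\omega$ in the ordered basis $(e_1,e_2,f_1,f_2)$ is exactly $J$; that is, this basis is symplectic. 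In these coordinates $S$ is represented by a block-diagonal matrix with diagonal blocks $A$ (the matrix of $S|_V$) and $B$ (the matrix of $S|_W$), and the relation $S^tJS=J$ reduces to the single identity $A^tB=I$, i.e. $B=(A^{-1})^t$.

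Next I would read off the consequences. Since transposition preserves rank, $\dim\ker(B-I)=\dim\ker(A^{-1}-I)=\dim\ker(A-I)$, and $\sigma(B)=\{\mu^{-1}:\mu\in\sigma(A)\}$. From $1\in\sigma(S)=\sigma(A)\cup\sigma(B)$ and this last fact we deduce $1\in\sigma(A)$, so $\dim\ker(A-I)\ge 1$. As $S-I$ is block-diagonal with blocks $A-I$ and $B-I$,
$$
\dim\ker(S-I)=\dim\ker(A-I)+\dim\ker(B-I)=2\,\dim\ker(A-I)\in\{2,4\}.
$$
If this dimension equals $4$ then $A-I=0$ and $B-I=0$, hence $S=I$; otherwise it equals $2$, which is the second alternative. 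This establishes the dichotomy.

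The argument is short and essentially pure linear algebra, so I do not expect a serious obstacle. The one point deserving a little care is the passage to the symplectic basis adapted to the splitting and the ensuing identity $B=(A^{-1})^t$; this is precisely the place where the hypothesis that \emph{both} $V$ and $W$ are Lagrangian (not merely invariant and complementary) is used.
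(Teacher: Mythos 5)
Your proof is correct and takes essentially the same route as the paper: the paper invokes Lemma 3.2.4 of Meyer--Hall--Offin to conjugate $S$ within $\textnormal{Sp}\,(\mathbb{R}^4)$ to a block form $\left(\begin{smallmatrix} \beta^t & 0 \\ 0 & \beta^{-1}\end{smallmatrix}\right)$ and then counts the kernel dimension, exactly as you do with $A$ and $B=(A^{-1})^t$. The only difference is that you derive this normal form directly from the nondegeneracy of $\omega$ on $V\times W$, making the argument self-contained where the paper cites the book.
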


\begin{proof}
It follows from Lemma 3.2.4 in the book \cite{MeyHalOff} that $S$ is conjugate in $\textnormal{Sp}\,(\mathbb{R}^4)$ to a matrix of the type
$$
\left( \begin{array}{cc} \beta^t & 0 \\ 0 & \beta^{-1} \end{array}\right)
$$
where $\beta$ in an invertible $2 \times 2$ matrix. From the assumptions on $S$ we deduce that $1$ is also an eigenvalue of $\beta$.
Then either $\beta$ is the identity or $\textnormal{ker}\,(\beta-I)$ has dimension one. In the first case $S=I$ while in the second the kernel of
$S-I$ has dimension two.
\end{proof}

The next result shows that the class of matrices having a splitting by invariant Lagrangian planes is closed in the symplectic group.

\begin{lemma}\label{new}
Let $\{ S_n\}$ be a sequence of matrices in $\textnormal{Sp}\,(\mathbb{R}^4)$ converging to $S$. In addition assume that for each $n \geq 0$ there exists a splitting of $\mathbb{R}^4$ by Lagrangian planes that are invariant under $S_n$. Then there exists another splitting by Lagrangian planes that are invariant under $S$.
\end{lemma}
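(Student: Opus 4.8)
The proof will rest on the compactness of the Lagrangian Grassmannian. Let $\Lambda$ be the set of Lagrangian planes of $\mathbb{R}^4$; since being Lagrangian is a closed condition, $\Lambda$ is a compact subset of the Grassmannian of $2$-planes. For each $n$ choose Lagrangian planes $V_n,W_n$ with $V_n\oplus W_n=\mathbb{R}^4$, $S_n(V_n)=V_n$ and $S_n(W_n)=W_n$. Passing to a subsequence we may assume $V_n\to V$ and $W_n\to W$ in $\Lambda$, so that $V$ and $W$ are again Lagrangian. They are moreover invariant under $S$: given $v\in V$, put $v_n:=\Pi_{V_n}v\in V_n$, where $\Pi_{V_n}$ is the orthogonal projection onto $V_n$; then $v_n\to v$, hence $S_nv_n\in V_n$ and $S_nv_n\to Sv$, which forces $Sv\in V$, and since $S$ is invertible $S(V)=V$. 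The same argument gives $S(W)=W$.

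What remains --- and this is the whole difficulty --- is to produce a splitting which is \emph{direct}: the ``angle'' between $V_n$ and $W_n$ may degenerate in the limit, so a priori $L:=V\cap W$ need not be trivial. I would treat the degenerate case by symplectic reduction along $L$. The plane $L$ is isotropic and $S$-invariant, hence $L\subseteq L^{\perp}$ (symplectic orthogonal), $S$ preserves the flag $L\subseteq L^{\perp}\subseteq\mathbb{R}^4$, and $S$ induces a symplectic isomorphism $\bar S$ of the reduced space $L^{\perp}/L$. Since $L\subseteq V$ and $L\subseteq W$ one has $V,W\subseteq L^{\perp}$, and a dimension count gives $L^{\perp}=V+W$; therefore $V/L$ and $W/L$ are complementary Lagrangian planes of $L^{\perp}/L$, invariant under $\bar S$. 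Thus the reduced map automatically carries an invariant Lagrangian splitting, and the issue is reduced to lifting it back to all of $\mathbb{R}^4$. This lift cannot come from $S$ alone --- for example $P_*$ leaves an invariant Lagrangian flag but, by Lemma \ref{two}, has no invariant Lagrangian splitting --- so it must use the sequence. Concretely, working in symplectic coordinates adapted to $V$ one writes $S=\left(\begin{smallmatrix}A&B\\0&A^{-t}\end{smallmatrix}\right)$ (with $A^{-1}B$ symmetric), represents a prospective invariant Lagrangian complement of $V$ as the graph of a symmetric matrix $C$, and checks that $C$ must solve a Stein-type equation $C-ACA^{t}=BA^{t}$; one then recovers $C$ as a (suitably normalized) limit of the symmetric matrices that encode the $W_n$ in these coordinates. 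An alternative is a case analysis on the Jordan types of $S|_{L}$ and of $\bar S$ on $L^{\perp}/L$, exploiting that each $S_n$ distributes its spectrum in a balanced way, $\sigma(S_n|_{W_n})=\sigma(S_n|_{V_n})^{-1}$, a balance that persists in the limit strongly enough to assemble invariant Lagrangian planes out of generalized eigenspaces of $S$.

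The main obstacle is exactly this last point: controlling the collapse of transversality. The compactness argument and the invariance of the limiting planes are routine; all the work lies in showing that, when $V$ and $W$ fail to be transverse, an invariant Lagrangian splitting can nonetheless be manufactured --- which is where one must use in full that each $S_n$ possesses a genuine direct splitting and not merely an invariant Lagrangian plane.
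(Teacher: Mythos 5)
Your opening paragraph (compactness of the Lagrangian Grassmannian, invariance of the limit planes $V$ and $W$) is correct, but it is the easy half of the lemma, and your proposal stops exactly where its content begins: the possible loss of transversality. What you offer for the degenerate case is a program, not a proof. The reduction along $L=V\cap W$ is fine as far as it goes, but the lifting step is delegated to the Stein-type equation $C-ACA^{t}=BA^{t}$, whose solvability is precisely what is in doubt (for $S=P_*$ it has no solution, as you yourself observe through Lemma \ref{two}); and the idea of recovering $C$ as a ``suitably normalized'' limit of the symmetric matrices encoding the $W_n$ cannot work: those matrices blow up exactly when transversality collapses, and after normalization their limit solves the \emph{homogeneous} equation, i.e.\ it produces one more invariant Lagrangian plane, not an invariant Lagrangian complement. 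The alternative route via Jordan types is not carried out at all.

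More importantly, the step you leave open cannot be closed by any amount of ``manufacturing'': when transversality is genuinely lost, the limit matrix may admit no invariant Lagrangian splitting whatsoever. For $\lambda\neq\pm1$ the matrix $S_\lambda=\left(\begin{smallmatrix}\lambda&0&\tau&0\\ 0&1&0&0\\ 0&0&\lambda^{-1}&0\\ 0&0&0&1\end{smallmatrix}\right)$ belongs to $\textnormal{Sp}\,(\mathbb{R}^4)$ and leaves invariant the two transverse Lagrangian planes $V_\lambda=\{\xi\in\mathbb{R}^4:\ \xi_3=\xi_4=0\}$ and $W_\lambda$ spanned by $\bigl(\tfrac{\tau}{\lambda^{-1}-\lambda},0,1,0\bigr)^t$ and $(0,0,0,1)^t$; yet $S_\lambda\to P_*$ as $\lambda\to 1$, with $P_*$ as in \eqref{partialtor}, and $P_*$ has no invariant Lagrangian splitting by Lemma \ref{two}. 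So a proof must \emph{prevent} the degeneration of the splittings rather than repair it afterwards, and this is where the paper's own argument puts all its weight: it never takes limits of the planes themselves, but of adapted bases. Using Lemma 3.2.3 of \cite{MeyHalOff} it selects an orthonormal basis $\{\alpha_n,\beta_n\}$ of $V_n$ and a basis $\{\gamma_n,\delta_n\}$ of $W_n$, asserted to be orthonormal as well, so that the four vectors form a symplectic basis; boundedness yields convergent subsequences, the relations $\omega(\alpha_n,\gamma_n)=\omega(\beta_n,\delta_n)=1$, $\omega(\alpha_n,\delta_n)=\omega(\beta_n,\gamma_n)=0$ pass to the limit, and a limiting symplectic basis automatically spans a transverse pair of invariant Lagrangian planes. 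In other words, everything hinges on a uniform bound for symplectically dual bases adapted to $(V_n,W_n)$ --- a lower bound on the very ``angle'' you worry about --- and this quantitative control is supplied neither by your sketch nor, as the family $S_\lambda$ indicates, by the bare hypotheses of the statement; some extra information on the $S_n$, such as the eigenplane structure used in the proof of Lemma \ref{key}, is what one should exploit at this point. This is a genuine gap, not a technicality that further effort along your proposed route would close.
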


\begin{proof}
Assume that $(V_n,W_n)$ is a Lagrangian splitting of $\mathbb{R}^4$ with $S_n(V_n) = V_n$, $S_n(W_n) = W_n$. We can apply Lemma 3.2.3. in the book \cite{MeyHalOff}. After selecting an orthonormal basis
of $V_n$, say $\{\alpha_n, \beta_n\}$, we extend it to a symplectic basis of $\mathbb{R}^4$, say $\{\alpha_n,\beta_n,\gamma_n,\delta_n\}$ with
$\gamma_n, \delta_n \in W_n$. In particular
$$
\omega(\alpha_n,\delta_n) = \omega(\beta_n,\gamma_n) = 0, \qquad \omega(\alpha_n,\gamma_n) = \omega(\beta_n,\delta_n)  = 1.
$$
Moreover, from the proof of the Lemma mentioned above,
$$
\omega(\alpha_n,\eta) = \langle \gamma_n,\eta \rangle, \qquad \omega(\beta_n,\eta) = \langle \delta_n, \eta \rangle
$$
for each $\eta \in W_n$. Thus, $\{\gamma_n,\delta_n\}$ is an orthonormal basis of $W_n$. After extracting subsequences we can assume that the vectors $\alpha_n
,\beta_n,\gamma_n,\delta_n$ converge as $n \to +\infty$, say
$$
\alpha_n \to \alpha, \quad \beta_n \to \beta, \quad \gamma_n \to \gamma, \quad \delta_n \to \delta.
$$
The plane $V$ spanned by $\alpha$ and $\beta$ is Lagrangian and the same can be said of the plane $W$ spanned by
$\gamma$ and $\delta$. Note that $\omega(\alpha_n,\beta_n) \to \omega(\alpha,\beta)$.
From $S_n \alpha_n \to S \alpha$, $S_n \beta_n \to S \beta$ we deduce that $S(V) \subset V$.
Since $S$ is an isomorphism, $S(V) = V$. Similarly, $S(W) = W$.
In this way we have found that $S$ admits a splitting by invariant Lagrangian planes. 
\end{proof}

For matrices in $\textnormal{Sp}\,(\mathbb{R}^2)$ the spectrum is contained in $\mathbb{S}^1 \cup (\mathbb{R} \setminus \{0\})$, where
$\mathbb{S}^1 = \{ \mu \in \mathbb{C} \, : \, \vert \mu \vert = 1\}$.
This is not always the case in $\textnormal{Sp}\,(\mathbb{R}^4)$ but we will show that it still holds in the neighborhood of certain matrices.

\begin{lemma}\label{key}
Assume that $P$ satisfies \eqref{parab}. Then there exists a neighborhood $\mathcal{U}_1 \subset \textnormal{Sp}\,(\mathbb{R}^4)$
such that
$$
\sigma(S) \subset \mathbb{S}^1 \cup (\mathbb{R} \setminus \{0\})
$$
for each $S \in \mathcal{U}_1$.
\end{lemma}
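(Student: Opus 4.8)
The plan is to argue by contradiction, letting the closedness Lemma \ref{new} and the rigidity Lemma \ref{two} do the decisive work. Suppose no such neighborhood exists. Then there is a sequence $\{S_n\} \subset \textnormal{Sp}\,(\mathbb{R}^4)$ with $S_n \to P$ and, for every $n$, an eigenvalue $\mu_n \in \sigma(S_n)$ lying neither on the unit circle nor on the real line; since symplectic matrices are invertible, $\mu_n \neq 0$, so in fact $\mu_n \notin \mathbb{R}$ and $\vert\mu_n\vert \neq 1$. Because the roots of the characteristic polynomial depend continuously on the entries of the matrix and $\sigma(P) = \{1\}$, every eigenvalue of $S_n$ converges to $1$; in particular $\mu_n \to 1$.

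The next step is to produce, for each $n$, a splitting of $\mathbb{R}^4$ into $S_n$-invariant Lagrangian planes. Since $\mu_n$ is non-real with $\vert\mu_n\vert \neq 1$, the four complex numbers $\mu_n$, $\overline{\mu_n}$, $\mu_n^{-1}$, $\overline{\mu_n}^{-1}$ are pairwise distinct; as $\sigma(S_n)$ consists of exactly four eigenvalues and is invariant both under complex conjugation (the matrix is real) and under $\mu \mapsto \mu^{-1}$ (the matrix is symplectic), these four numbers are precisely $\sigma(S_n)$, and $S_n$ is diagonalizable over $\mathbb{C}$. Fix an eigenvector $v_n$ for $\mu_n$ and an eigenvector $w_n$ for $\mu_n^{-1}$; then $\overline{v_n}$ and $\overline{w_n}$ are eigenvectors for $\overline{\mu_n}$ and $\overline{\mu_n}^{-1}$. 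From $\omega(S\xi, S\eta) = \omega(\xi,\eta)$ applied to eigenvectors $\xi,\eta$ with eigenvalues $\lambda,\lambda'$ one gets $(\lambda\lambda' - 1)\,\omega(\xi,\eta) = 0$, hence $\omega(\xi,\eta) = 0$ whenever $\lambda\lambda' \neq 1$. Taking $\xi = v_n$, $\eta = \overline{v_n}$ (eigenvalue product $\vert\mu_n\vert^2 \neq 1$) and, likewise, $\xi = w_n$, $\eta = \overline{w_n}$, it follows that the real two-planes $V_n = \textnormal{span}_{\mathbb{R}}\{\Rea v_n, \Imm v_n\}$ and $W_n = \textnormal{span}_{\mathbb{R}}\{\Rea w_n, \Imm w_n\}$ are Lagrangian. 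They are $S_n$-invariant because $v_n$ and $w_n$ are eigenvectors of the real matrix $S_n$, and since $V_n \otimes \mathbb{C} = \textnormal{span}_{\mathbb{C}}\{v_n, \overline{v_n}\}$ and $W_n \otimes \mathbb{C} = \textnormal{span}_{\mathbb{C}}\{w_n, \overline{w_n}\}$ together span $\mathbb{C}^4$, we have $V_n \oplus W_n = \mathbb{R}^4$.

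Having realized, for each $n$, a splitting of $\mathbb{R}^4$ by $S_n$-invariant Lagrangian planes, Lemma \ref{new} provides a splitting of $\mathbb{R}^4$ by $P$-invariant Lagrangian planes. Since $1 \in \sigma(P)$, Lemma \ref{two} then forces either $P = I$ or $\textnormal{dim}\,\textnormal{ker}\,(P - I) = 2$, each of which contradicts \eqref{parab}. This contradiction yields the neighborhood $\mathcal{U}_1$.

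The step that needs to be carried out with care is the linear-algebra bookkeeping of the second paragraph: one must track exactly which pairings of the four eigenvalues force $\omega$ to vanish, and check that the real and imaginary parts of the complex eigenvectors genuinely assemble into a \emph{Lagrangian} splitting of $\mathbb{R}^4$ (in particular that each $V_n$, $W_n$ is two-dimensional, which uses $\mu_n \notin \mathbb{R}$), rather than merely a pair of invariant planes. The topological input — that having such a splitting is preserved under limits — is precisely Lemma \ref{new}, so that part is already available.
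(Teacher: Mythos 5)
Your proposal is correct and follows essentially the same route as the paper: a contradiction argument in which the eigenvectors for $\mu_n$ and $\mu_n^{-1}$ generate an $S_n$-invariant Lagrangian splitting (Lagrangian because $\omega(v_n,\bar v_n)=\mu_n\bar\mu_n\,\omega(v_n,\bar v_n)$ with $\vert\mu_n\vert\neq 1$), which passes to the limit by Lemma \ref{new} and then contradicts \eqref{parab} via Lemma \ref{two}. The only difference is cosmetic: you verify the simplicity of the four eigenvalues and the directness of the sum explicitly, where the paper notes these facts more briefly.
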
 

\begin{proof}
By a contradiction argument assume the existence of a sequence of matrices $S_n$ in 
$\textnormal{Sp}\,(\mathbb{R}^4)$ converging to $P$ and such that $\sigma(S_n)$ is not contained in
$\mathbb{S}^1 \cup (\mathbb{R} \setminus \{0\})$. Hence $S_n$ has four different eigenvalues denoted by
$\mu_n, \bar{\mu}_n, \mu_n^{-1}, \bar{\mu}_n^{-1}$ with $\vert \mu_n \vert > 1$ and $\Im \mu_n \neq 0$.
In addition we know that $\mu_n \to 1$ as $n \to +\infty$.

Let $v_n$ and $w_n$ be vectors in $\mathbb{C}^4 \setminus \{0\}$ satisfying 
$$
S_n v_n = \mu_n v_n, \qquad S_n w_n = \mu_n^{-1} w_n.
$$
The plane $V_n \subset \mathbb{R}^4$ spanned by $\Re v_n$ and $\Im v_n$ is invariant under $S_n$.
Similarly the plane $W_n \subset \mathbb{R}^4$ spanned by $\Re w_n$ and $\Im w_n$ satisfies $S_n(W_n) = W_n$.
Since $V_n$ and $W_n$ correspond to different eigenvalues we deduce that they are a splitting of $\mathbb{R}^4$. We claim that $V_n$ and
$W_n$ are Lagrangian. To prove it we observe that the form $\omega$ is preserved by symplectic matrices and so
$$
\omega(v_n, \bar{v}_n) = \omega(S_n v_n, S_n \bar{v}_n) = \mu_n \bar{\mu}_n \omega(v_n, \bar{v}_n).
$$
From $\vert \mu_n \vert \neq 1$ we deduce that $\omega(v_n, \bar{v}_n) = 0$.
This implies that $\omega$ vanishes on $V_n \times V_n$ and so $V_n$ is Lagrangian. The same can be said about $W_n$.

Once we have a splitting by Lagrangian planes
that are invariant under $S_n$ we can apply Lemma \ref{new} to find another Lagrangian splitting invariant under $P$.
Then we can apply Lemma \ref{two} to find a contraction with \eqref{parab}.
\end{proof}

\begin{proof}[Proof of Proposition \ref{discri}]
Given $S \in \textnormal{Sp}\,(\mathbb{R}^4)$ we label its eigenvalues by $\mu_1,\mu_2,\mu_3,\mu_4$ with
$\mu_1\mu_3 = 1$ and $\mu_2 \mu_4 = 1$. Note that eigenvalues are counted according to their multiplicity. 
We define the numbers 
$$
\Delta_1 = \mu_1 + \mu_3, \qquad \Delta_2 = \mu_2 + \mu_4
$$
and observe that $\Delta_1 = \Delta_2 = 2$ when $S = P$. Here we are using the assumption $\sigma(P) = \{1\}$. In consequence 
we can find a neighborhood $\mathcal{U}_2 \subset \textnormal{Sp}\,(\mathbb{R}^4)$ of the matrix $P$ such that
$\Delta_1 > 0$ and $\Delta_2 > 0$ if $S \in \mathcal{U}_2$. Define
$\mathcal{U} = \mathcal{U}_1 \cap \mathcal{U}_2$, where $\mathcal{U}_1$ is given by Lemma \ref{key}. From the definition of $\Delta_1$
and $\Delta_2$, 
$$
\textnormal{tr}\,S = \Delta_1 + \Delta_2.
$$ 
Also
$$
\textnormal{det}\,(S-I) = (2-\Delta_1)(2-\Delta_2).
$$
To prove this identity we observe that the characteristic polynomial of $S$ can be factorized as
$$
\textnormal{det}\,(S-\lambda I ) = (\lambda^2-\Delta_1 \lambda + 1)(\lambda^2 - \Delta_2 \lambda + 1).
$$
Then it is sufficient to let $\lambda = 1$.

We are ready to prove that if $S$ is elliptic then \eqref{trace} holds. The ellipticity of $S$ implies that 
$\mu_3 = \bar{\mu}_1$, $\mu_4 = \bar{\mu}_2$ with $\mu_i \neq 1$ and $\vert \mu_i \vert = 1$. Then
$\Delta_i = 2 \Re \mu_i <2$ and the condition \eqref{trace} becomes a consequence of the previous formulas 
for $\textnormal{tr}\,S$ and $\textnormal{det}\,(S-I)$.

Assume now that $S \in \mathcal{U}$ satisfies \eqref{trace}. It follows from
Lemma \ref{key} that either $\mu_i \in \mathbb{R}$
or $\vert \mu_i \vert = 1$ with $\bar{\mu}_{i+2} = \mu_i$ In any of the two cases the numbers $\Delta_i$ are real and the conditions
in \eqref{trace} are equivalent to
$$
(2-\Delta_1)(2-\Delta_2)>0 \quad \mbox{ and } \quad \Delta_1 + \Delta_2 < 4.
$$
Once we known that $\Delta_i > 0$, $i=1,2$, they become equivalent to $\vert \Delta_i \vert < 2$ and so $\mu_i \in \mathbb{S}^1$, $\mu_i \neq \pm 1$.
\end{proof}

\begin{remark}
It is not hard to check that in $\mathcal{U}$ the conditions 
$$
\textnormal{det}\,(S-I) > 0, \qquad \textnormal{tr}\,S > 4
$$
correspond to the hyperbolic case $\mu_i \in \mathbb{R}$, $\mu_i \neq \pm 1$, $i=1,2$.
Finally,
$$
\textnormal{det}\,(S-I) < 0
$$ 
appears in the mixed elliptic-hyperbolic case, say $\mu_1 \in \mathbb{R}$, $\vert \mu_2 \vert = 1$, $\mu_i \neq \pm 1$, $i=1,2$.
\end{remark}

\section{Poincar\'{e} coordinates}\label{s3}

In connection with Kepler problem
\begin{equation}\label{k}
\dot x = y, \qquad \dot y = -\frac{x}{\vert x \vert^3}, \qquad (x,y)  \in \left(\mathbb{R}^2 \setminus \{0\}\right) \times \mathbb{R}^2,
\end{equation}
we consider the functions
$$
E(x,y) = \frac{1}{2}\vert y \vert^2 - \frac{1}{\vert x \vert} \quad \mbox{ and } \quad M(x,y) = x_1 y_2 - x_2 y_1
$$
and the set
$$
\mathcal{E}_+ = \left\{ (x,y) \in \left(\mathbb{R}^2 \setminus \{0\}\right) \times \mathbb{R}^2 \, : \,
E(x,y) < 0,\, M(x,y) > 0
\right\}.
$$
Elliptic and circular motions with positive orientation have states lying in $\mathcal{E}_+$. In particular the circular motions have states lying on the surface
$$
\mathcal{C} =\left\{ (x,y) \in \left(\mathbb{R}^2 \setminus \{0\}\right) \times \mathbb{R}^2 \, : \,
2E(x,y) M(x,y)^2 = -1
\right\}.
$$
Incidentally we observe that the inequality $2EM^2 \geq -1$ holds everywhere.

The astronomical coordinates $(a,e,l,g)$ are related to the Cartesian coordinates by the formulas
$$
x = R[g] \left( \begin{array}{l}
\vspace{0.2cm}
a(\cos u - e) \\
a \sqrt{1-e^2} \sin u 
\end{array}\right),
\qquad 
y = R[g] \left( \begin{array}{l}
\vspace{0.2cm}
- \frac{\sin u}{a^{1/2} (1-e\cos u)} \\
\frac{\sqrt{1-e^2}\cos u}{a^{1/2}(1-e\cos u)}
\end{array}\right),
$$
where
$$
R[g] = \left( \begin{array}{cc}
\cos g & -\sin g \\
\sin g & \cos g 
\end{array}\right)
\quad \mbox{ and } \quad
u - e \sin u = l.
$$
The elliptic and circular orbits of \eqref{k}, $x=x(t)$, $y=y(t)$, are obtained by letting $l = \tfrac{t}{a^{3/2}}$.

The change of coordinates 
$$
\mathcal{A}: (a,e,l,g) \in \,]0,\infty[\,\times \,]0,1[\,\times \mathbb{T}^2 \mapsto (x,y) \in \mathcal{E}_+ \setminus \mathcal{C} 
$$
is a real analytic diffeomorphism. From now on $\mathbb{T} =\mathbb{R} /2\pi \mathbb{Z}$.  Indeed the map $\mathcal{A}$ is also analytic on 
$\left\{ e = 0\right\}$ but the extension of $\mathcal{A}$ to $\left\{ e \geq 0\right\}$
is no longer a diffeomorphism. The map $\mathcal{A}$ collapses the three-dimensional manifold
$]0,\infty[\,\times \{0\}\times \mathbb{T}^2$ into the surface $\mathcal{C}$.
For further reference we write explicitly the formula of $\mathcal{A}$ on 
$\left\{ e = 0\right\}$, 
\begin{equation}\label{e0}
x =  a \left( \begin{array}{l}
\vspace{0.2cm}
\cos\lambda \\
\sin \lambda 
\end{array}\right),
\quad
y =  a^{-1/2} \left( \begin{array}{l}
\vspace{0.2cm}
-\sin\lambda \\
\cos \lambda 
\end{array}\right),
\quad
\lambda = l + g.
\end{equation}

Next we introduce the Delaunay variables $(l,L;g,G)$ by the formulas
$$
L = \sqrt{a}, \qquad G = \sqrt{a(1-e^2)}.
$$
If we define
$$
\Sigma = \left\{ (L,G) \in \mathbb{R}^2 \, : \,0 < G < L \right\}
$$
then the map
$$
\mathcal{D}: (L,G,l,g) \in \Sigma \times \mathbb{T}^2 \mapsto (x,y) \in \mathcal{E}_+ \setminus \mathcal{C}
$$
is also an analytic diffeomorphism. The advantage with respect to the previous coordinates is that $\mathcal{D}$ defines a symplectic transformation; that is,
$$
dl \wedge dL + dg \wedge dG = dx_1 \wedge dy_1 + dx_2 \wedge dy_2.
$$
The problem with the circular motions still remains in these new coordinates, now
$\left\{ e = 0\right\}$ is replaced by $\left\{ L = G \right\}$. To overcome this difficulty, 
Poincar\'{e} coordinates $(\lambda,\Lambda;\eta ,\xi )$ are introduced by the formulas
$$
\lambda = l + g, \qquad \Lambda = L, \qquad \eta = \sqrt{2H} \sin h, \qquad \xi = \sqrt{2H} \cos h,
$$
where $h = -g$ and $H = L - G$.

A straightforward computation shows that
\begin{equation}\label{sym}
d\lambda \wedge d\Lambda + d\eta \wedge d\xi = dl \wedge dL + dg \wedge dG.
\end{equation}
The above discussion follows along the lines of the original exposition by Poincar\'e in Sections 56-57 of \cite{Poi05}. The notation is taken from an unpublished report
by A. Chenciner. In the book by Poincar\'e the coordinates $H$ and $h$ are called $\rho_1$ and $\omega_1$.
In principle the formula (\ref{sym}) is valid on the domain
$$
\Omega^* = \left\{ (\lambda,\Lambda;\eta,\xi) \in \mathbb{T} \times \,]0,\infty[\,\times \mathbb{R}^2 \, : \; 0 < \eta^2 + \xi^2 < 2\Lambda\right\}
$$
and so the map 
$$
\mathcal{P}: (\lambda,\Lambda,\eta,\xi) \in \Omega^* \mapsto (x,y) \in \mathcal{E}_+ \setminus \mathcal{C}
$$
is a symplectic diffeomorphism.

This map has a continuous extension defined on
$$
\Omega = \left\{ (\lambda,\Lambda;\eta,\xi) \in \mathbb{T} \times \,]0,\infty[\,\times \mathbb{R}^2 \, : \, \eta^2 + \xi^2 < 2\Lambda\right\}
$$
and, in contrast to the previous cases, the manifold $\Omega \setminus \Omega^*$ has the same dimension of
$\mathcal{C}$. The map $\mathcal{P}$ on $\Omega \setminus \Omega^*$ is given by the formulas in \eqref{e0}
with $a = \Lambda^2$. It defines a diffeomorphism between $\Omega \setminus \Omega^*$ and $\mathcal{C}$. From the formulas it is not obvious at all that the extended map is analytic in the variables $(\lambda,\Lambda,\eta,\xi)$. 
Poincar\'e proved this in Sections  65-69 of his book \cite{Poi05}. For further reference we state the main conclusion of this Section as a proposition.

\begin{proposition}\label{analytic}
The map $\mathcal{P}: \Omega \to \mathcal{E}_+$ is a real analytic symplectic diffeomorphism.
\end{proposition}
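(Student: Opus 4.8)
The plan is to break the statement into three claims: that $\mathcal{P}$ is a continuous bijection of $\Omega$ onto $\mathcal{E}_+$; that $\mathcal{P}$ is real analytic; and that, once analyticity is known, $D\mathcal{P}$ is invertible at every point and the pull-back identity $\mathcal{P}^*(dx_1\wedge dy_1 + dx_2\wedge dy_2) = d\lambda\wedge d\Lambda + d\eta\wedge d\xi$ holds on all of $\Omega$. The first claim is already contained in the discussion preceding the statement: on $\Omega^*$ the map $\mathcal{P}$ is a symplectic diffeomorphism onto $\mathcal{E}_+\setminus\mathcal{C}$, on $\Omega\setminus\Omega^*$ it is a diffeomorphism onto $\mathcal{C}$, the images $\mathcal{E}_+\setminus\mathcal{C}$ and $\mathcal{C}$ partition $\mathcal{E}_+$, and $\mathcal{P}$ is continuous on $\Omega$; hence $\mathcal{P}$ is a continuous bijection. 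Likewise, on the open set $\Omega^*$ both analyticity and the symplectic relation \eqref{sym} are already available, since there $\mathcal{P}$ is the composition of the analytic diffeomorphism $\mathcal{A}$ with the analytic, symplectic passages to Delaunay and then Poincar\'e variables (all valid as long as $e\in\,]0,1[$, i.e. $0<\eta^2+\xi^2<2\Lambda$). So the whole problem concentrates on the behaviour of $\mathcal{P}$ near $\Omega\setminus\Omega^* = \{\eta=\xi=0\}$, where the defining formulas involve $\sqrt{2H}\,(\sin h,\cos h)$ with $2H=\eta^2+\xi^2$, $h=-g$, and the analytic structure is hidden behind a square root of a polar-coordinate type.

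The decisive step is to exhibit $(x,y)$ as analytic functions of $(\lambda,\Lambda,\eta,\xi)$ on a full neighbourhood of $\Omega\setminus\Omega^*$ in $\Omega$, and the device is to route the computation through the Cartesian components of the Laplace--Runge--Lenz (eccentricity) vector, which are the right analytic coordinates. Setting $a=\Lambda^2$ and $H=L-G$ and using $H=L(1-\sqrt{1-e^2})$ one finds $e^2 = \tfrac{\eta^2+\xi^2}{\Lambda}\bigl(1-\tfrac{\eta^2+\xi^2}{4\Lambda}\bigr)$, an analytic function of $(\Lambda,\eta,\xi)$ on $\Omega$ with $0\le e^2<1$; moreover $e^2/(2H)$ is analytic and strictly positive on $\Omega$, so $e/\sqrt{2H}=\sqrt{e^2/(2H)}$ is analytic and positive, and therefore
\[
p := e\cos g = \tfrac{e}{\sqrt{2H}}\,\xi, \qquad q := e\sin g = -\tfrac{e}{\sqrt{2H}}\,\eta
\]
are analytic functions of $(\lambda,\Lambda,\eta,\xi)$ on $\Omega$ vanishing precisely on $\Omega\setminus\Omega^*$. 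Next I would introduce $\psi := u+g$; Kepler's equation $u-e\sin u = l$ with $l=\lambda-g$ takes the form $\psi = \lambda + p\sin\psi - q\cos\psi$, and since the $\psi$-derivative of the right-hand side equals $p\cos\psi+q\sin\psi$, which vanishes at $p=q=0$, the analytic implicit function theorem provides a unique analytic solution $\psi=\psi(\lambda,p,q)$ near $p=q=0$ with $\psi(\lambda,0,0)=\lambda$; hence $\psi$ is analytic in $(\lambda,\Lambda,\eta,\xi)$ near $\Omega\setminus\Omega^*$, and it agrees there with the genuine $u+g$ by uniqueness. Finally one rewrites the Cartesian expressions for $(x,y)$ eliminating every bare occurrence of $\cos g,\sin g,\cos u,\sin u$ in favour of $\cos\psi,\sin\psi,p,q$ (via $g+u=\psi$, $e\cos u = p\cos\psi+q\sin\psi$, $e\sin u=p\sin\psi-q\cos\psi$) and every occurrence of $\sqrt{1-e^2}$ or $1-\sqrt{1-e^2}$ in favour of analytic functions of $e^2$; for instance $x_1 = \Lambda^2\bigl[\cos\psi - p + q(p\sin\psi-q\cos\psi)\,\beta(e^2)\bigr]$ with $\beta(e^2)=(1+\sqrt{1-e^2})^{-1}$, and similarly for $x_2,y_1,y_2$ (these reduce to \eqref{e0} when $p=q=0$). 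This proves analyticity of $\mathcal{P}$ on a neighbourhood of $\Omega\setminus\Omega^*$, hence, together with the classical analyticity on $\Omega^*$, on all of $\Omega$.

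With analyticity in hand the last step is soft. The function $\det D\mathcal{P}$ is real analytic on $\Omega$ and equals $1$ on the dense open subset $\Omega^*$ (where $\mathcal{P}$ is symplectic), so $\det D\mathcal{P}\equiv 1$ on $\Omega$; in particular $D\mathcal{P}$ is everywhere invertible, so $\mathcal{P}$ is a local analytic diffeomorphism, and being also a bijection of $\Omega$ onto $\mathcal{E}_+$ it is a global analytic diffeomorphism. Similarly, the two continuous $2$-forms $\mathcal{P}^*(dx_1\wedge dy_1+dx_2\wedge dy_2)$ and $d\lambda\wedge d\Lambda+d\eta\wedge d\xi$ agree on $\Omega^*$ by \eqref{sym}, hence on $\Omega$ by continuity; thus $\mathcal{P}$ is symplectic.

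I expect the only genuinely delicate point to be the analyticity across $\Omega\setminus\Omega^*$ --- this is exactly what occupies Sections~65--69 of \cite{Poi05}. The obstruction is that the naive formulas are not manifestly analytic there, and overcoming it rests on two observations: that $e\cos g$ and $e\sin g$ (equivalently, the Cartesian components of the Runge--Lenz vector) are the right analytic unknowns, and that Kepler's equation should be solved in the desingularised variable $\psi=u+g$, to which the analytic implicit function theorem applies at eccentricity zero. Everything else is either a density/continuity argument or a routine, if somewhat lengthy, rearrangement of the explicit formulas.
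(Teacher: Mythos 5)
Your proposal is correct and follows essentially the same route as the paper's proof: analyticity of $e^2$ and of $e\cos g$, $e\sin g$ across $\{\eta=\xi=0\}$, the analytic implicit function theorem applied to Kepler's equation in a shifted variable (your $\psi=u+g$ is the paper's $u-l$ up to the known shift $\lambda$), a rewriting of the Cartesian formulas in these analytic quantities, and finally extension of the symplectic identity \eqref{sym} to all of $\Omega$ plus the inverse function theorem. The differences (working with $e\cos g,e\sin g$ instead of $e\cos l,e\sin l$, and invoking analytic continuation rather than continuity of the derivatives) are purely cosmetic.
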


For completeness a proof of this result will be included in the Appendix. Also in the Appendix we will  explain how to compute the successive derivatives
of $\mathcal{P} = (x,y)$ at $\eta = \xi = 0$. To obtain explicit formulas in the circular case we need to know the derivatives of $x$ of first and second order with respect to $\lambda,\eta$ and $\xi$. These derivatives are
$$
\partial_{(\lambda,\eta,\xi)} x(\lambda,\Lambda,0,0) = 
\left(\Lambda^2 i e^{i\lambda},\frac{\Lambda^{3/2}}{2}\left(3i+ie^{2i\lambda}\right),
\frac{\Lambda^{3/2}}{2}\left(-3 + e^{2i\lambda}\right)\right)
$$
and
$$
\partial^2_{(\lambda,\eta,\xi)} x(\lambda,\Lambda,0,0) = 
\left( 
\begin{array}{ccc}\vspace{0.2cm}
-\Lambda^2 e^{i\lambda} & -\Lambda^{3/2} e^{2i\lambda} & \Lambda^{3/2} i e^{2i\lambda}  \\
\vspace{0.2cm}
* & -\Lambda e^{i\lambda} - \tfrac{\Lambda}{4}\left( e^{-i\lambda} + 3e^{3i\lambda}\right) &
\tfrac{\Lambda}{4} \left( -i e^{-i\lambda} + 3 i e^{3i\lambda}\right) \\
* & * &  -\Lambda e^{i\lambda} + \tfrac{\Lambda}{4}\left( e^{-i\lambda} + 3 e^{3i\lambda}\right)
\end{array}
\right). 
$$
To simplify the formulas we use complex notation and identify $x = (x_1,x_2)$ to $x = x_1 + i x_2$.

\section{Local continuation of periodic solutions}\label{s4}

The system \eqref{pk} has a Hamiltonian structure with
$$
\dot x = K_y, \qquad \dot y = -K_x,
$$
and
$$
K(t, x, y; \ep) = \frac{1}{2}\vert y \vert^2 - \frac{1}{\vert x \vert} - \ep \,U(t,x).
$$
From now on we assume that $U: \mathbb{T} \times \mathbb{R}^2 \to \mathbb{R}$ is a continuous function having partial derivatives with respect to $x$ up to the third order. 
Moreover the functions $(t,x) \mapsto \partial_x^\alpha U(t,x)$ with $\vert \alpha \vert \leq 3$ are continuous. These assumptions will be summarized by $U \in C^{0,3}(\mathbb{T} \times \mathbb{R}^2)$.

In the elliptic region $\mathcal{E}_+$ this system can be transformed via the Poincar\'e coordinates into a
new system with Hamiltonian function
$$
\mathcal{K}_\ep (t,\lambda,\Lambda,\eta,\xi) = -\frac{1}{2\Lambda^2} - \ep\,\mathcal{U}(t,\lambda,\Lambda,\eta,\xi)
$$
where $(\lambda,\Lambda,\eta,\xi)$ lies in $\Omega$ and $\mathcal{U}(t,\lambda,\Lambda,\eta,\xi)
= U(t,x(\lambda,\Lambda,\eta,\xi))$. Note that $x = x(\lambda,\Lambda,\eta,\xi)$ is defined by 
$\mathcal{P}$. In agreement with (\ref{sym}) the system becomes 
\begin{equation}\label{hp}
\dot \lambda = \frac{1}{\Lambda^3} - \ep\, \partial_{\Lambda}\,\mathcal{U},
\qquad \dot \Lambda = \ep \partial_{\lambda}\, \mathcal{U},
\qquad \dot \eta = -\ep\, \partial_{\xi}\, \mathcal{U}
\qquad \dot \xi =\ep\, \partial_{\eta}\, \mathcal{U},
\end{equation}
and we observe that for $\ep = 0$ and $N \geq 1$ this system has a continuum of $2\pi$-periodic
solutions defined by
$$
\lambda(t) = Nt + \lambda_0, \quad \Lambda(t) = \frac{1}{N^{1/3}},
\quad \eta(t) = \eta_0, \quad \xi(t) = \xi_0.
$$
Note that $\lambda$ is an angular variable and the minimal period of these solutions is 
$\tfrac{2\pi}{N}$.

The perturbed system \eqref{hp} is in the framework of averaging theory for periodic
systems. We consider
the function
$$
\gamma_N(\lambda_0,\eta_0,\xi_0) = \frac{1}{2\pi} \int_0^{2\pi}
\mathcal{U} \left(t,Nt + \lambda_0,\frac{1}{N^{1/3}},\eta_0,\xi_0\right)\,dt
$$
defined on the solid torus $\lambda_0 \in \mathbb{T}$, $\eta_0^2 + \xi_0^2<\tfrac{2}{N^{1/3}}$.
The next result establishes a link between this function and the continuation of
$2\pi$-periodic solutions.

\begin{proposition}\label{exis}
Assume that $(\lambda_0,\eta_0,\xi_0)$ is a non-degenerate critical point of
$\gamma_N$. Then there exists a $\mathcal{C}^1$ family of $2\pi$-periodic solutions of \eqref{hp},
$\lambda = \lambda(t,\ep)$, $\Lambda = \Lambda(t,\ep)$, 
$\eta = \eta(t,\ep)$, $\xi = \xi(t,\ep)$,
defined for $\ep$ small and satisfying
$$
\lambda(t,0) = Nt + \lambda_0, \quad \Lambda(t,0) = \frac{1}{N^{1/3}},
\quad \eta(t,0) = \eta_0, \quad \xi(t,0) = \xi_0.
$$
\end{proposition}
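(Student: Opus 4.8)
The plan is to set up the fixed-point equation for the time-$2\pi$ map of \eqref{hp} and resolve the degeneracy of the unperturbed map by a two-step reduction, ending with an application of the implicit function theorem governed by the hypothesis $\det D^2\gamma_N\neq 0$. Throughout, $N$ is the fixed positive integer of the statement, and $(\lambda_0,\eta_0,\xi_0)$ denotes the given non-degenerate critical point, which we may assume to lie in the interior of the solid torus. First I would lift the angular variable $\lambda$ to $\mathbb{R}$ and, writing $(\lambda(t),\Lambda(t),\eta(t),\xi(t))$ for the solution of \eqref{hp} with data $(\lambda_1,\Lambda_1,\eta_1,\xi_1)$ at $t=0$ and parameter $\ep$, consider
$$
F(\lambda_1,\Lambda_1,\eta_1,\xi_1;\ep) = \big(\lambda(2\pi)-\lambda_1-2\pi N,\;\Lambda(2\pi)-\Lambda_1,\;\eta(2\pi)-\eta_1,\;\xi(2\pi)-\xi_1\big).
$$
Since $\mathcal{P}$ is analytic (Proposition \ref{analytic}) and $U\in C^{0,3}$, the right-hand side of \eqref{hp} is continuous in $t$ and of class $C^2$ in the phase variables, so the flow, and hence $F$, is $C^2$ in $(\lambda_1,\Lambda_1,\eta_1,\xi_1,\ep)$. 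Near the unperturbed solution the increment $\lambda(2\pi)-\lambda_1$ is close to $2\pi N$, so on a suitable neighborhood the vanishing of $F$ is equivalent to $2\pi$-periodicity (in $\mathbb{T}$ for $\lambda$, exact for the rest) of the corresponding solution, automatically with winding number $N$.

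The first reduction eliminates $\Lambda_1$ through the ``fast'' direction. For $\ep=0$ one has $\Lambda(t)\equiv\Lambda_1$ and $\lambda(2\pi)-\lambda_1=2\pi/\Lambda_1^3$, so the first component of $F$ equals $2\pi(\Lambda_1^{-3}-N)$, which vanishes exactly at $\Lambda_1=N^{-1/3}$ and whose $\Lambda_1$-derivative there is $-6\pi N^{4/3}\neq 0$. By the implicit function theorem there is a $C^2$ function $\Lambda_1=\Lambda_1(\lambda_1,\eta_1,\xi_1;\ep)$, defined for $(\lambda_1,\eta_1,\xi_1)$ near $(\lambda_0,\eta_0,\xi_0)$ and $\ep$ small, with $\Lambda_1(\cdot;0)\equiv N^{-1/3}$, annihilating the first component of $F$. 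The second reduction rescales the remaining three components. From the exact identities $\Lambda(2\pi)-\Lambda_1=\ep\int_0^{2\pi}\partial_\lambda\mathcal{U}\,dt$, $\eta(2\pi)-\eta_1=-\ep\int_0^{2\pi}\partial_\xi\mathcal{U}\,dt$, $\xi(2\pi)-\xi_1=\ep\int_0^{2\pi}\partial_\eta\mathcal{U}\,dt$ one sees that, after substitution of $\Lambda_1(\lambda_1,\eta_1,\xi_1;\ep)$, these three components are $C^2$ and vanish identically at $\ep=0$; the Hadamard factorization $\tilde F_j(\cdot;\ep)=\ep\int_0^1\partial_\ep\tilde F_j(\cdot;s\ep)\,ds$ then shows that $\mathbf{G}:=\ep^{-1}(\Lambda(2\pi)-\Lambda_1,\eta(2\pi)-\eta_1,\xi(2\pi)-\xi_1)$ extends to a $C^1$ map across $\ep=0$.

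The key computation is $\mathbf{G}(\cdot;0)$. At $\ep=0$ one has $\Lambda_1=N^{-1/3}$ and the solution is $\lambda(t)=Nt+\lambda_1$, $\Lambda\equiv N^{-1/3}$, $\eta\equiv\eta_1$, $\xi\equiv\xi_1$, and the $\Lambda_1$-variation contributes nothing since $\partial_{\Lambda_1}F_j|_{\ep=0}=0$ for $j=2,3,4$; by the chain rule the three period integrals become $\partial_{\lambda_0}$, $-\partial_{\xi_0}$, $\partial_{\eta_0}$ of $2\pi\,\gamma_N(\lambda_1,\eta_1,\xi_1)$, so $\mathbf{G}(\cdot;0)=2\pi\,(\partial_{\lambda_0}\gamma_N,\,-\partial_{\xi_0}\gamma_N,\,\partial_{\eta_0}\gamma_N)$. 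This vanishes at $(\lambda_0,\eta_0,\xi_0)$ because $\nabla\gamma_N=0$ there, and its Jacobian in $(\lambda_1,\eta_1,\xi_1)$ is, up to a sign change in one row and a transposition of two columns, $2\pi\,D^2\gamma_N(\lambda_0,\eta_0,\xi_0)$, hence has determinant $\pm(2\pi)^3\det D^2\gamma_N\neq 0$. A final application of the implicit function theorem to $\mathbf{G}=0$ at $(\lambda_0,\eta_0,\xi_0,0)$ yields a $C^1$ branch $\ep\mapsto(\lambda_1(\ep),\eta_1(\ep),\xi_1(\ep))$ through $(\lambda_0,\eta_0,\xi_0)$; together with $\Lambda_1(\ep)=\Lambda_1(\lambda_1(\ep),\eta_1(\ep),\xi_1(\ep);\ep)$ these are initial conditions for which all four components of $F$ vanish, i.e. $2\pi$-periodic solutions of \eqref{hp} reducing at $\ep=0$ to the stated solution and depending $C^1$ on $\ep$ (jointly with $t$) by smooth dependence of the flow on data; for $\ep$ small they remain in $\Omega$ since $(\lambda_0,\eta_0,\xi_0)$ is interior to the solid torus. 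The main obstacle is exactly the degeneracy noted above: the unperturbed time-$2\pi$ map is the identity on the slice $\{\Lambda_1=N^{-1/3}\}$, so a direct implicit function argument fails; it is the two-stage splitting — remove $\Lambda_1$ via the non-degenerate fast direction, then divide the surviving return conditions by $\ep$ to expose $\nabla\gamma_N$ at leading order — that converts the hypothesis $\det D^2\gamma_N\neq 0$ into precisely the invertibility needed to close the argument, with the secondary technical point being the $C^1$ regularity of $\mathbf{G}$ up to $\ep=0$.
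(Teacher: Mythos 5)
Your proposal is correct and follows essentially the same route as the paper: first solve the return condition for $\Lambda$ via the implicit function theorem using $h''(r_N)=-6\pi N^{4/3}/(2\pi)\neq 0$ (the paper's hypothesis \eqref{H2}), then divide the remaining components of the return map by $\ep$ and identify the leading term with $2\pi\mathcal{R}\,\nabla\gamma_N$, so that non-degeneracy of the critical point gives the final implicit function argument. The only differences are presentational (the paper works in the general degenerate Hamiltonian setting of Section \ref{s4} and computes $\Phi$ via the variational equation in $\ep$, while you integrate the equations of motion directly), plus a harmless slip in describing the Jacobian of $\mathbf{G}(\cdot;0)$ as a column rather than a row operation on $D^2\gamma_N$.
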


\begin{remark}
1. The point $(\lambda_0,\eta_0,\xi_0)$ satisfies $\nabla \gamma_N (\lambda_0,\eta_0,\xi_0) = 0$ and also
$\textnormal{det}\, \left( D^2 \gamma_N (\lambda_0,\eta_0,\xi_0)\right) \neq 0$.
If we pass to Cartesian coordinates the corresponding periodic solution
$x(t,\ep)$ has winding number $N$. This is clear for $\ep = 0$ and then we observe that the winding number is independent
of $\ep$.
\smallbreak
\noindent
2. To connect $\gamma_N$ with the function $\Gamma_N$ defined in the introduction we observe that the map
$\mathcal{P}$ defines a diffeomorphism between $\Omega \cap \left\{ \Lambda = \tfrac{1}{N^{1/3}}\right\}$
(the domain of $\gamma_N$) and $\Sigma_N$ as defined in the introduction. Then $\Gamma_N \circ \mathcal{P} = \gamma_N$
and so the critical points of both functions are in a one-to-one correspondence. In addition the Morse index coincides.
\smallbreak
\noindent
3. The result is also valid for negative $N$. Indeed it is sufficient to change the direction of time in \eqref{hp}.
\end{remark}

The above result is a consequence of the general theory of averaging but we will present a direct proof. This will allow us to perform some computations that will be useful later.

The Kepler problem can be seen as a model for a large class of degenerate Hamiltonian systems and we find convenient to work in a general setting. From now on we consider a system of symplectic coordinates $(\theta,q;r,p)$ associated to the $2$-form
$$
d\theta \wedge dr + \sum_{i=1}^d dq_i \wedge dp_i.
$$
Here $\theta$ is an angular variable, the points $q = (q_1,\ldots,q_d)$,
$p = (p_1,\ldots,p_d)$ lie in $\mathbb{R}^d$ and the phase space is of the type
$$
\Omega = \left\{ z = (\bar\theta,q,r,p) \, : \, \bar\theta \in \mathbb{T}, r \in I, (q,p) \in D_r \right\}
$$
where $I$ is an open interval in $\mathbb{R}$ and $D_r$ is an open subset of $\mathbb{R}^d \times \mathbb{R}^d$ which may change 
continuously with $r$.

Consider also the Hamiltonian function
$$
\mathcal{K}_\ep(t,\theta,q,r,p) = h(r) - \ep \mathcal{U}(t,\theta,q,r,p)
$$
with $h \in C^3(I)$ and $\mathcal{U} \in C^{0,3}(\mathbb{T} \times \Omega)$.

The associated system is
$$
\dot z = J \nabla \mathcal{K}_\ep (t,z)
$$
or, in coordinates, 
\begin{equation}\label{gkp}
\dot\theta = h'(r) - \ep \partial_r \,\mathcal{U}, \qquad
\dot q = - \ep \partial_p \,\mathcal{U}, \qquad
\dot r = \ep \partial_\theta \,\mathcal{U}, \qquad
\dot p = \ep \partial_q \,\mathcal{U}.
\end{equation}
The perturbed Kepler problem \eqref{hp} is recovered with $\theta = \lambda$, $r = \Lambda$, $q=\eta$, $p = \xi$ and
$h(r) = -\tfrac{1}{2}r^{-2}$.

Given an integer $N$ we select $r_N \in I$ such that
\begin{equation}\label{H1}
h'(r_N) = N
\end{equation}
The number $r_N$ (when it exists) produces a family of $2\pi$-periodic solutions of the system \eqref{gkp} for $\ep = 0$, namely
\begin{equation}\label{fam}
\theta(t) = \theta_0 + Nt, \quad q(t) = q_0, \quad r(t) = r_N, \quad p(t) = p_0
\end{equation}
with parameters $\bar\theta_0 \in \mathbb{T}$ and $(q_0,p_0) \in D_{r_N}$.

From now on the initial conditions will be simply denoted by $(\theta,q,r,p)$. The Poincar\'e map of \eqref{gkp} is denoted by
$$
\Pi_\ep: (\theta,q,r,p) \mapsto (\theta',q',r',p')
$$
and sends an initial condition at time $t = 0$ to the value of the corresponding solution at time
$t = 2\pi$. For $\ep = 0$ this map is well defined on the whole domain $\Omega$ and can be computed explicitly,
\begin{equation}\label{p0}
\Pi_0(\theta,q,r,p)  = \left(\theta + 2\pi h'(r),q,r,p\right).
\end{equation}

Once a point $(\theta_*,q_*,r_*,p_*)$ lying in $\Omega$ has been selected, the map $\Pi$ is well defined and of class $C^2$ in a neighborhood of
$(\theta_*,q_*,r_*,p_*,\ep = 0)$.

We want to find periodic solutions emanating from the family \eqref{fam} at a point satisfying $r_* = r_N$. From now on we work on a small neighborhood of this point and try to solve the system
\begin{equation}\label{fsys}
\theta' = \theta + 2\pi N, \qquad q' = q, \qquad r' = r, \qquad p' = p.
\end{equation}

Let us assume that
\begin{equation}\label{H2}
h''(r_N) \neq 0.
\end{equation}
Then $\tfrac{\partial \theta'}{\partial r}$ does not vanish at $\epsilon =0$. The implicit function theorem can be applied to the equation
$\theta' = \theta + 2\pi N$ in order to express the unknown $r$ as a $C^2$ function of the remaining variables, 
$$
r = \varphi(\theta,q,p;\ep) \qquad \mbox{ with } \; \varphi(\theta,q,p;0) = r_N.
$$
Then the system \eqref{fsys} is reduced to
$$
q' = q, \qquad r' = r, \qquad p' = p
$$
where $r$ is now a dependent variable. In other words, we are looking for zeros of the map
$\sigma = (\sigma_1,\sigma_2,\sigma_3)$, 
$$
\sigma(\theta,q,p;\ep) = (q'-q,r'-r,p'-p).
$$
From the explicit formula for $\Pi_0$ given by \eqref{p0} and the identity $r = r_N$ for $\ep = 0$ we deduce that the restriction
of $\sigma$ to $\ep = 0$ vanishes everywhere. To obtain a bifurcation for $\ep \neq 0$ we must look for zeros of the map
$$
\Phi(\theta,q,p) = \partial_\ep \sigma(\theta,q,p;0).
$$
By chain rule,
$$
\begin{array}{l}
\vspace{0.3cm}
\displaystyle{\partial_\ep \sigma_1 = \frac{\partial q'}{\partial \ep} + \frac{\partial q'}{\partial r}\frac{\partial r}{\partial \ep}}  \\
\vspace{0.3cm}
\displaystyle{\partial_\ep \sigma_2 = \frac{\partial r'}{\partial \ep} + \frac{\partial r'}{\partial r}\frac{\partial r}{\partial \ep} - \frac{\partial r}{\partial \ep}} \\
\displaystyle{\partial_\ep \sigma_3 = \frac{\partial p'}{\partial \ep} + \frac{\partial p'}{\partial r}\frac{\partial r}{\partial \ep}}. 
\end{array}
$$
Again from \eqref{p0} it follows that $\tfrac{\partial r'}{\partial r} = 1$ and
$\tfrac{\partial q'}{\partial r} = \tfrac{\partial p'}{\partial r} = 0$ at $\ep = 0$.
Hence,
$$
\Phi = \left(\frac{\partial q'}{\partial \ep},\frac{\partial r'}{\partial \ep},\frac{\partial p'}{\partial \ep} \right)\vert_{\ep = 0} =
\pi \circ \partial_\ep \Pi_\ep \vert_{\ep = 0},
$$
where $\pi$ is the projection defined by
$$
\pi(\theta,q,r,p) = (q,r,p).
$$
To compute $\partial_\ep \Pi_\ep \vert_{\ep = 0}$ we first consider the variational equation
(with respect to $\ep$) for the system \eqref{gkp} around a solution
of the family \eqref{fam} with $\ep = 0$,
$$
\dot \delta_1 = h''(r_N) \delta_3 - \partial_r \, \mathcal{U}, \qquad
\dot \delta_2 = -\partial_{p}\, \mathcal{U},  \qquad
\dot \delta_3 = \partial_{\theta}\, \mathcal{U}, \qquad
\dot \delta_4 = \partial_{q} \,\mathcal{U}.
$$
The last three coordinates are easily computed by direct integration and,
after imposing the initial condition $\delta_i(0) = 0$, $1\leq i \leq 4$, we are led to the identity
\begin{equation}\label{sch}
\Phi = 2\pi \mathcal{R} \,\nabla \mathcal{U}_{\#}  
\end{equation}
where
$$
\mathcal{R} = \left(
\begin{array}{ccc}
0 & 0 & -I_d \\
1 & 0 & 0 \\
0 & I_d & 0
\end{array}
\right)
\quad \mbox{ and }
\quad
\mathcal{U}_{\#}(\theta,q,p) = \frac{1}{2\pi}\int_0^{2\pi} \mathcal{U}(t,\theta+Nt,q,r_N,p)\,dt.
$$
A sufficient condition for the bifurcation of $\sigma = 0$ is $\Phi = 0$ and $\textnormal{det}\,\Phi' \neq 0$.
This is equivalent to finding non-degenerate critical points of $\mathcal{U}_{\#}$. In many aspects the previous analysis is reminiscent of
Section 46 in \cite{Poi92}. 

The conditions \eqref{H1} and \eqref{H2} are satisfied by the Kepler problem with $r_N = \tfrac{1}{N^{1/3}}$. 
Since $\gamma_N = \mathcal{U}_{\#}$ in this case, the proof of Proposition \ref{exis} is complete.

It is possible to obtain other bifurcation results with more relaxed conditions on the critical point. For instance, a bifurcation exists when the function
$\gamma_N$ has an isolated critical point and $\nabla \gamma_N$ has non-zero Brouwer index at this point. This bifurcation does not necessarily leads to a smooth family of periodic solutions.

\section{Periodic solutions of elliptic type}\label{s5}

In this Section we prove the main result of the paper, Theorem \ref{st}. First we work with the general class of systems introduced in Section \ref{s4}.
This will allow us to obtain an instability result valid in arbitrary dimension.

Let $z(t,\ep) = (\theta(t,\ep),q(t,\ep),r(t,\ep),p(t,\ep))$ be a $C^1$ family of $2\pi$-periodic solutions of the system \eqref{gkp}.
We assume that the conditions \eqref{H1} and \eqref{H2} hold and
$$
z(t,0) = (\theta_* + Nt, q_*, r_N, p_*).
$$

\begin{proposition}\label{ins}
In the previous setting assume that one of the conditions below hold,
\begin{equation}\label{H3}
h''(r_N) \partial_{\theta\theta}^2 \,\mathcal{U}_{\#}(\theta_*,q_*,p_*) > 0,
\end{equation}
\begin{equation}\label{H4}
h''(r_N) \,\textnormal{det}\,\left[D^2 \mathcal{U}_{\#}(\theta_*,q_*,p_*)\right] > 0,
\end{equation}
then, for small $\ep > 0$, the solution $z(t,\ep)$ is unstable in the Lyapunov sense.
\end{proposition}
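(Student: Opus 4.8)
The plan is to read the stability of $z(\cdot,\ep)$ off its Floquet multipliers, that is, off the eigenvalues of the monodromy matrix $M_\ep=D\Pi_\ep$ evaluated at the fixed point of the Poincar\'e map corresponding to $z(\cdot,\ep)$. Lyapunov instability of the periodic solution is equivalent to that of this fixed point, and a fixed point of a $C^1$ map whose linearisation has an eigenvalue of modulus $>1$ is unstable; hence it suffices to produce, for small $\ep>0$, an eigenvalue $\mu$ of $M_\ep$ with $|\mu|>1$. This will be done in two ways. Under \eqref{H3} we shall exhibit a real eigenvalue $>1$ directly. Under \eqref{H4} we shall instead show that $\textnormal{det}(M_\ep-I)<0$, which is enough: since $M_\ep\in\textnormal{Sp}(\mathbb{R}^{2d+2})$ its spectrum is invariant under inversion, so the eigenvalues group in pairs $\{\mu,1/\mu\}$, and the contribution of such a pair to $\textnormal{det}(M_\ep-I)=\prod_j(\mu_j-1)$ is positive unless $\mu$ is real, positive and $\neq1$, in which case it equals $-(\mu-1)^2/\mu<0$; so $\textnormal{det}(M_\ep-I)<0$ forces a real eigenvalue $>1$.

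The second step is the expansion $M_\ep=M_0+\ep\,M_1+o(\ep)$. By \eqref{p0}, $M_0=D\Pi_0=I+2\pi h''(r_N)\,E$, where $E$ is the matrix whose only nonzero entry is a $1$ in the row of $\theta$ and the column of $r$; by \eqref{H2} the matrix $M_0-I$ is nonzero of rank one, so $M_0\neq I$ and $\sigma(M_0)=\{1\}$. The term $M_1$ is obtained exactly as in Section~\ref{s4}: the variational equation of \eqref{gkp} along the family \eqref{fam}, differentiated once more in $\ep$ and integrated by means of the fundamental matrix $Z_0(t)=I+t\,h''(r_N)\,E$ of the system at $\ep=0$. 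Two pieces of information are needed: the entry $(M_1)_{r\theta}=2\pi\,\partial^2_{\theta\theta}\mathcal{U}_{\#}(\theta_*,q_*,p_*)$, and the fact that the minor of $M_1$ obtained by deleting the row of $\theta$ and the column of $r$ equals, up to a fixed sign, $(2\pi)^{2d+1}\,\textnormal{det}\big[D^2\mathcal{U}_{\#}(\theta_*,q_*,p_*)\big]$. Both follow from the identity $\Phi=2\pi\,\mathcal{R}\,\nabla\mathcal{U}_{\#}$ of \eqref{sch} after one more differentiation in the base point, once one notices that the contribution to $M_1$ coming from the $\ep$-dependence of the fixed point affects only the single $(\theta,r)$-entry (because $\Pi_0$ depends nonlinearly only on $r$, and only through its $\theta$-component), hence is invisible to that minor; the sign and the power of $2\pi$ are fixed by $\textnormal{det}\,\mathcal{R}=(-1)^{d}$.

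The last step is the spectral analysis, which uses only $M_\ep=M_0+\ep M_1+o(\ep)$ and the rank-one structure of $M_0-I$. Set $\chi_\ep(\lambda)=\textnormal{det}(M_\ep-\lambda I)$. Substituting $\lambda=1+s\sqrt{\ep}$ and retaining the lowest power of $\ep$ (to that order only the identity permutation and the transposition of the indices of $\theta$ and $r$ contribute to the determinant) one gets
$$
\chi_\ep\big(1+s\sqrt{\ep}\big)=s^{2d}\Big(s^{2}-(2\pi)^{2}\,h''(r_N)\,\partial^2_{\theta\theta}\mathcal{U}_{\#}(\theta_*,q_*,p_*)\Big)\,\ep^{\,d+1}+o\big(\ep^{\,d+1}\big).
$$
Under \eqref{H3} the coefficient of $s^{2d}$ has a positive root in $s$, so by the intermediate value theorem $\chi_\ep$ has, for small $\ep>0$, a real root $\lambda=1+O(\sqrt{\ep})>1$. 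Under \eqref{H4} the same rank-one structure of $M_0-I$ gives
$$
\textnormal{det}(M_\ep-I)=-(2\pi)^{2d+2}\,h''(r_N)\,\textnormal{det}\big[D^2\mathcal{U}_{\#}(\theta_*,q_*,p_*)\big]\,\ep^{\,2d+1}+o\big(\ep^{\,2d+1}\big),
$$
which is negative for small $\ep>0$. In either case $M_\ep$ has an eigenvalue of modulus $>1$, so $z(t,\ep)$ is unstable.

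The main obstacle is the order-$\ep$ computation of the monodromy matrix: identifying the $(\theta,r)$-minor of $M_1$ with $\textnormal{det}\,D^2\mathcal{U}_{\#}$ and keeping track of all the signs, so that \eqref{H3} and \eqref{H4} indeed correspond to instability and not (for $d=1$) to the elliptic case of Proposition~\ref{discri}. The second delicate point is that the perturbation is degenerate: $M_0$ is not semisimple and $\sigma(M_0)=\{1\}$, so the two multipliers that decide (in)stability leave the value $1$ only at order $\sqrt{\ep}$ while the remaining $2d$ move at order $\ep$; it is the combination of the explicit first-order data from Section~\ref{s4} with the inversion symmetry of the spectrum of a symplectic matrix that makes it possible to conclude without computing any second-order term.
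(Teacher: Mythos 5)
Your proposal is correct. For \eqref{H4} it follows essentially the paper's route: your expansion $\textnormal{det}\,(S(\ep)-I)=-(2\pi)^{2d+2}h''(r_N)\,\textnormal{det}\,[D^2\mathcal{U}_{\#}]\,\ep^{2d+1}+o(\ep^{2d+1})$ is exactly \eqref{F1} (with $\tau_N=2\pi h''(r_N)$ absorbed), and the spectral conclusion is the same symplectic pairing argument; the only imprecision is that for a complex eigenvalue $\mu$ with $\vert\mu\vert\neq 1$ the single pair $\{\mu,1/\mu\}$ contributes the non-real number $-(\mu-1)^2/\mu$ to $\prod_j(\mu_j-1)$, so you must group it with the conjugate pair $\{\bar\mu,1/\bar\mu\}$ to see that its total contribution is positive; the conclusion that a negative determinant forces a real positive multiplier $\neq 1$, hence one of modulus $>1$, stands. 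For \eqref{H3} you take a genuinely different route. The paper computes the trace, using that the Poincar\'e map is canonical, and obtains \eqref{F2}, $\textnormal{tr}\,S(\ep)=2d+2+4\pi^2\ep\,h''(r_N)\partial^2_{\theta\theta}\mathcal{U}_{\#}+o(\ep)$, which under \eqref{H3} exceeds $2d+2$ and is incompatible with a spectrum on the unit circle. You instead blow up the characteristic polynomial at $\lambda=1+s\sqrt{\ep}$, using only the entry $\partial^2_{\ep\theta}r'=2\pi\partial^2_{\theta\theta}\mathcal{U}_{\#}$ coming from \eqref{sch}; your leading term $s^{2d}\bigl(s^2-(2\pi)^2h''(r_N)\partial^2_{\theta\theta}\mathcal{U}_{\#}\bigr)\ep^{d+1}$ is correct (only the identity and the $\theta r$-transposition contribute at that order, and the base-point correction $\mathcal{N}$ sits in the $(\theta,r)$ entry, so it is invisible both here and in the minor), and evaluating at two fixed values of $s$ and applying the intermediate value theorem legitimately yields a real multiplier $1+c\sqrt{\ep}+o(\sqrt{\ep})$ with $c>0$. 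Your version is more elementary (no canonical identities needed) and exhibits explicitly the $\sqrt{\ep}$-splitting of the two critical multipliers along the real axis; the paper's trace computation costs more but its formula \eqref{F2} is then reused, together with \eqref{F1} and Proposition \ref{discri}, to prove the ellipticity half of Theorem \ref{st}, information which your single-entry expansion by itself does not provide.
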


The proof of this result will provide some additional information on the variational equation
\begin{equation}\label{lvs}
\dot \delta = J \nabla \mathcal{K}_\ep (t,z(t,\ep))\delta.
\end{equation}
 We will prove that some Floquet multiplier must lie outside the unit circle. This implies that $z(t,\ep)$ is Lyapunov unstable and also that it is not elliptic
\begin{proof}
The Poincar\'e map associated to \eqref{gkp} is denoted by $\Pi_\ep(z) = \Pi(z,\ep)$. The monodromy matrix associated to \eqref{lvs}
is given by
$$
S(\ep) = \partial_z \Pi(z(0,\ep),\ep).
$$
This matrix defines a path in the symplectic group $\textnormal{Sp}\,(\mathbb{R}^{2(d+1)})$. We will compute the Taylor expansion of $S(\ep)$ 
at $\ep = 0$ but first we go back to the formula \eqref{p0} in order to compute the successive derivatives of $\Pi(z,0)$.
The Jacobian matrix is
\begin{equation}\label{expd}
\partial_z \Pi(z,0) =  \left(
\begin{array}{cccc}
1 & 0 & \tau & 0 \\ 
0 & I_d & 0 & 0 \\
0 & 0 & 1 & 0 \\
0 & 0 & 0 & I_d
\end{array}
\right) \qquad \mbox{ with } \; \tau = 2\pi h''(r),
\end{equation}
and all second derivatives vanish excepting $\tfrac{\partial^2 \theta'}{\partial r^2} = 2\pi h'''(r)$. 

Differentiating $S(\ep)$ with respect to $\ep$ and evaluating at $\ep = 0$,
$$
S'(0) = H = \partial^2_{\ep z} \Pi(z_*,0) +\mathcal{N}
$$
where $z_* = (\theta_*,q_*,r_N,p_*)$ and $\mathcal{N} = (n_{ij})$ satisfies $n_{ij} = 0$ if $(i,j) \neq (1,d+2)$ and
$$
n_{1, d+2} =\frac{\partial^2 \theta'}{\partial r^2}(z_*,0) \frac{\partial r}{\partial \ep}(0,0).
$$
Note that this coefficient occupies the same position at $\mathcal{N}$ as the coefficient $\tau$ in the above matrix.

Thus 
\begin{equation}\label{Texp}
S(\ep) = P_* + \ep H + o(\ep), \quad \mbox{ as } \, \ep \to 0,
\end{equation}
with $P_* = \partial_z \Pi(z_*,0)$. From the previous expansion it is not difficult to see that
$$
\textnormal{det}\,(S(\ep)-I) = (-1)^{d+1} \tau_N \ep^{2d+1} \textnormal{det}\, M + o\left(\ep^{2d+1}\right)
$$
where $\tau_N = 2\pi h''(r_N)$ and $M$ is the sub-matrix of $\partial^2_{\ep z} \Pi(z_*,0)$ obtained after eliminating the first row and the column $d+2$.

To determine $M$ we first define the map
$$
\tilde \Pi(\theta,q,p,\ep) = (q',r',p')
$$
where $(\theta',q',r',p') = \Pi_\ep(\theta,q,r_N,p)$. Some computations of the previous Section leading to
\eqref{sch} show that
$$
\partial_\ep \tilde\Pi(\theta,q,p,0) = 2\pi\mathcal{R} \nabla \mathcal{U}_{\#} (\theta,q,p).
$$
Differentiating this identity with respect to $\theta$, $q$ and $p$ and evaluating at 
$(\theta_*,q_*,p_*)$ we obtain
$$
M = 2\pi \mathcal{R} D^2 \mathcal{U}_{\#}(\theta_*,q_*,p_*).
$$
Since $\textnormal{det}\,\mathcal{R} = (-1)^d$ we deduce that
\begin{equation}\label{F1}
\textnormal{det}\,(S(\ep)-I) = -\tau_N (2\pi)^{2d+1} \ep^{2d+1} \textnormal{det}\,\left[ D^2 \mathcal{U}_{\#}(\theta_*,q_*,p_*)\right] + o\left(\ep^{2d+1}\right).
\end{equation}
If we assume that \eqref{H4} holds then the determinant of $S(\ep) - I$ is negative if $\ep > 0$ is small enough.
Let $\mu_1(\ep),\ldots,\mu_{2d+2}(\ep)$ be the eigenvalues of $S(\ep)$. They satisfy $\mu_k(\ep) \to 1$ and $\mu_k(\ep) \neq 1$ if
$\ep \neq 0$ is small. We prove by a contradiction argument that some $\mu_k(\ep)$ must lie outside $\mathbb{S}^1$, for otherwise
the eigenvalues could be labeled in such a way that $\overline{\mu_i(\ep)} = \mu_{i+d+1}(\ep)$ and
$$
\textnormal{det}\,(S(\ep)-I) = \prod_{i=1}^{d+1} \vert 1 - \mu_i(\ep) \vert^2 > 0.
$$
This proves the Proposition when \eqref{H4} holds. To deal with the other assumption \eqref{H3} we go back to the expansion \eqref{Texp} and compute the trace
$$
\textnormal{tr}\,S(\ep) = 2d+2 + \ep \textnormal{tr}\,H  + o(\ep) = 2d+2 + \ep \textnormal{tr}\, \partial^2_{\ep z} \Pi(z_*,0) + o(\ep).
$$
The Poincar\'e map is canonical and so
$$
d\theta' \wedge dr' + \sum_{i=1}^d dq_i' \wedge dp_i' = d\theta \wedge dr + \sum_{i=1}^d dq_i \wedge dp_i,
$$
implying
$$
\frac{\partial \theta'}{\partial \theta} \frac{\partial r'}{\partial r} -
\frac{\partial \theta'}{\partial r} \frac{\partial r}{\partial \theta} +
\sum_{j=1}^d\left( \frac{\partial q_j'}{\partial \theta} \frac{\partial p_j'}{\partial r} -
\frac{\partial q_j'}{\partial r} \frac{\partial p_j'}{\partial \theta} \right)
 = 1 
$$
and
$$
\frac{\partial \theta'}{\partial q_i} \frac{\partial r'}{\partial p_i} -
\frac{\partial \theta'}{\partial p_i} \frac{\partial r}{\partial q_i} +
\sum_{j=1}^d\left( \frac{\partial q_j'}{\partial q_i} \frac{\partial p_j'}{\partial p_i} -
\frac{\partial q_j'}{\partial p_i} \frac{\partial p_j'}{\partial q_i} \right)
 = 1. 
$$
Differentiating these identities with respect to $\ep$ and evaluating at $\ep = 0$,
$$
\partial^2_{\ep \theta} \theta' + \partial^2 _{\ep r} r' - \frac{\partial \theta'}{\partial r}\partial^2_{\ep\theta}r' = 0
$$
and
$$
\partial^2_{\ep q_i} q_i' + \partial^2 _{\ep p_i} p_i' = 0.
$$
Here we are using the formula \eqref{expd}. In particular,
$$
\frac{\partial \theta'}{\partial r}|_{\ep = 0} = 2\pi h''(r).
$$
In consequence, if $\ep = 0$
$$
\partial^2_{\ep \theta} \theta' + \partial^2 _{\ep r} r' + \sum_{i=1}^d \left( \partial^2_{\ep q_i} q_i' + \partial^2 _{\ep p_i} p_i' \right)
= 2\pi h''(r) \partial^2_{\ep \theta} r'.
$$
We know from the previous computations on the matrix $M$ that 
$$
\partial^2_{\ep \theta} r' = 2\pi \partial^2_{\theta\theta}\, \mathcal{U}_{\#} \quad \mbox{ if } \, \ep = 0, \, r = r_N.
$$ 
From the previous expansion for the trace we deduce now that
\begin{equation}\label{F2}
\textnormal{tr}\,S(\ep) = 2d+2 + \ep 4\pi^2 h''(r_N) \partial^2_{\theta\theta}\, \mathcal{U}_{\#}(\theta_*,q_*,p_*) + o(\ep).
\end{equation}
Therefore $\textnormal{tr}\,S(\ep) > 2d+2$ if \eqref{H2} holds and $\ep > 0$ is small enough. We claim that some eigenvalues must lie outside 
$\mathbb{S}^1$. Otherwise we could label the eigenvalues as before and deduce that
$$
\textnormal{tr}\,S(\ep) = 2 \sum_{i=1}^d \Re (\mu_i(\ep)) \leq 2(d+1).
$$
Note that now $\mu_i(\ep) = 1$ is admissible.
\end{proof}

The previous Proposition can be applied to the Kepler problem. It leads to the conclusions of the main Theorem \ref{st}
on instability. To obtain the conclusion on ellipticity the dimension will play a role. Since the monodromy matrix $S(\ep)$ now belongs to $\textnormal{Sp}\,(\mathbb{R}^4)$, the results of Section
\ref{s2} can be employed. From the formula \eqref{expd} we deduce that $S(0) = P_*$ where $P_*$ is given by
\eqref{partialtor} with $\tau = \tau_N = -6\pi N^{4/3}$. Then Proposition \ref{discri} implies that, 
for $\ep > 0$ small, the matrix $S(\ep)$ will be elliptic if
$$
\textnormal{det}\,(S(\ep)-I) > 0 \quad \mbox{ and } \quad \textnormal{tr}\,S(\ep) < 4.
$$
The expansions \eqref{F1} and \eqref{F2} imply that these inequalities are equivalent to
$$
\textnormal{det}\,[D^2 \mathcal{U}_{\#}] > 0 \quad \mbox{ and } \quad \partial^2_{\theta\theta}\, \mathcal{U}_{\#} > 0
$$
with $\mathcal{U}_{\#} = \gamma_N$.

\section{Explicit computations for the circular case}\label{s6}

In this section we will discuss the continuation from circular orbits for the forced Kepler problem
\begin{equation}\label{2}
\ddot x = -\frac{x}{\vert x \vert^3} - \ep\, p(t)
\end{equation}
where $p: \mathbb{R} \to \mathbb{R}^2$, $p = (p_1,p_2)^t$, is a continuous and $2\pi$-periodic function.

The associated potential is 
$$
U(t,x) = -\langle p(t), x \rangle
$$
and, after fixing an integer $N > 0$, the function $\gamma_N$ introduced in Section \ref{s4} can be expressed as
$$
\gamma_N(\lambda,\eta,\xi) = -\frac{1}{2\pi} \int_0^{2\pi}
\left\langle p(t), x(\lambda + Nt, \Lambda_N,  \eta ,\xi ) \right\rangle \,dt
$$
where $\Lambda_N = \tfrac{1}{N^{1/3}}$ and $x = x(\lambda,\Lambda,\eta,\xi)$ is defined by the Poincar\'e coordinates.
The domain of $\gamma_N$ is the solid torus $\mathbb{T} \times \mathbb{B}_N$ with
$$
\mathbb{B}_N = \{ (\eta,\xi) \in \mathbb{R}^2 : \, \eta^2 + \xi^2 < 2 \Lambda_N \}.
$$ 
In order to apply the results of Sections \ref{s4} and \ref{s5} at the circular orbits we must look for critical points of $\gamma_N$ lying on the equator $\mathbb{T} \times \{0\}$. 
It will be convenient to employ the complex notation that is typical in Fourier analysis. To this end we identify $p = p_1 + i p_2$, $x = x_1 + i x_2$ and rewrite $\gamma_N$ by the expression
$$
\gamma_N(\lambda,\eta,\xi) = -\frac{1}{2\pi}
\int_0^{2\pi} \Re\left( p(t) \,\overline{x(\lambda+Nt,\Lambda_N,\eta,\xi)}\right) \,dt. 
$$
The Fourier coefficients of $p(t)$ are given by the formulas 
$$
c_n = \frac{1}{2\pi} \int_0^{2\pi} p(t) e^{-int}\,dt, \qquad n \in \mathbb{Z}.
$$
Using the identities presented at the end of Section \ref{s3} we can express the gradient of $\gamma_N$ along $\mathbb{T} \times \{0\}$ in terms of $c_0, c_N$ and $c_{2N}$. In particular 
$$
\nabla \gamma_N(\lambda,0,0) = 0
$$
is equivalent to
\begin{equation}\label{pregrad}
\Im (e^{-i\lambda} c_N )=0,\qquad e^{-2i\lambda} c_{2N} =3\overline{c_0} .
\end{equation}
This system in $\lambda$ has a solution if and only if
\begin{equation}\label{grad}
3 \vert c_0 \vert = \vert c_{2N} \vert \quad \mbox{ and } \quad
c_0 c_{2N} \overline{c_N}^2 \in [0,\infty[\,.
\end{equation}
When some of these Fourier coefficients do not vanish this condition implies that $\gamma_N$ has exactly two critical points on 
$\mathbb{T} \times \{0\}$ and they are antipodal, that is, 
$(\lambda^*,0,0)$ and $(\lambda^* + \pi,0,0)$ for some $\lambda^*$.
In the degenerate case $c_0 = c_N = c_{2N} = 0$ the function $\gamma_N$ has a continuum of critical points along the equator.

In the real Banach space $C(\mathbb{T},\mathbb{R}^2)$ the subset described by \eqref{grad} contains a manifold of codimension two ($c_0 \neq 0$, $c_N \neq 0$) but it also contains a linear manifold of codimension $4$ described by 
\begin{equation}\label{lm}
c_0 = c_{2N} = 0.
\end{equation}
For simplicity in the computations we will restrict to this linear manifold. From now on we assume that \eqref{lm} holds. We also assume that
$$
c_N \neq 0,
$$
for otherwise critical points on 
$\mathbb{T} \times \{0\}$ would not be isolated.

Let us choose $\lambda^*$ such that
$$
e^{i\lambda^*} = \frac{c_N}{\vert c_N \vert}.
$$
In agreement with \eqref{pregrad} we deduce that $(\lambda^*,0,0)$ and
$(\lambda^*+\pi,0,0)$ are critical points. From the identities for the second derivatives of $x$ (Section \ref{s3}), we deduce that
$$
D^2 \gamma_N(\lambda^*,0,0) = 
\left( \begin{array}{c|c}
\Lambda_N^2 \vert c_N \vert & 0 \qquad 0  \\
\hline
\begin{array}{l} 0 \\ 0 \end{array} & \Lambda_N \mathcal{M}(p)   
\end{array}\right)
$$
with
$$
\mathcal{M}(p) = \left( \begin{array}{cc}
\vspace{0.2cm}
\displaystyle{\vert c_N \vert + \frac{1}{4} \Re\left( \frac{c_N c_{-N}}{\vert c_N \vert} + 3 \frac{\overline{c_N}^3}{\vert c_N \vert^3}c_{3N} \right)} & 
\displaystyle{\frac{1}{4} \Im\left(\frac{c_N c_{-N}}{\vert c_N \vert} - 3 \frac{\overline{c_N}^3}{\vert c_N \vert^3}c_{3N} \right)} \\
* &  \displaystyle{\vert c_N \vert - \frac{1}{4} \Re\left( \frac{c_N c_{-N}}{\vert c_N \vert} + 3 \frac{\overline{c_N}^3}{\vert c_N \vert^3}c_{3N} \right)}
\end{array}\right).
$$
In addition
$$
D^2 \gamma_N(\lambda^* + \pi,0,0) = - D^2 \gamma_N(\lambda^*,0,0).
$$
If the determinant of $\mathcal{M}(p)$ does not vanish then Proposition \ref{exis} and Theorem \ref{st} apply.
In this way we are led to a very explicit result.

\begin{proposition}\label{expli}
Assume that $p(t)$ is such that $c_0 = c_{2N} = 0$ and $c_N \neq 0$. In addition 
$$
\textnormal{det}\,\mathcal{M}(p) \neq 0.
$$
Then, for small $\ep > 0$, the equation \eqref{2} has two smooth families of $2\pi$-periodic solutions satisfying
$$
x(t,\ep) = e^{i (\lambda^* + Nt)} + O(\ep), \qquad \tilde{x}(t,\ep) = - e^{i(\lambda^* + Nt)} + O(\ep)
$$ 
for $\lambda^*$ satisfying $e^{i\lambda^*} =\frac{c_N}{|c_N|}$. If $\textnormal{det}\,\mathcal{M}(p) > 0$
the first family is elliptic and the second is unstable in the Lyapunov sense; if $\textnormal{det}\,\mathcal{M}(p) < 0$ both families are unstable.
\end{proposition}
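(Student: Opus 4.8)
The statement follows by applying Proposition~\ref{exis} and Theorem~\ref{st} to the two critical points of $\gamma_N$ on the equator $\mathbb{T}\times\{0\}$; since the Hessian $D^2\gamma_N(\lambda^*,0,0)$ has already been computed just before the statement, what remains is essentially a bookkeeping of signs.

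\textbf{Step 1: locate the critical points and check non-degeneracy.} The hypotheses $c_0=c_{2N}=0$ and $c_N\neq 0$ place us in the situation of the linear manifold \eqref{lm} treated above: condition \eqref{grad} holds trivially, not all of $c_0,c_N,c_{2N}$ vanish, and \eqref{pregrad} reduces to $\Im(e^{-i\lambda}c_N)=0$. Hence the critical points of $\gamma_N$ on $\mathbb{T}\times\{0\}$ are exactly the antipodal pair $(\lambda^*,0,0)$ and $(\lambda^*+\pi,0,0)$ with $e^{i\lambda^*}=c_N/|c_N|$, and their Hessians are the block-diagonal matrices displayed above. Expanding the $3\times 3$ determinant, $\det D^2\gamma_N(\lambda^*,0,0)=\Lambda_N^2|c_N|\cdot\Lambda_N^2\det\mathcal{M}(p)=\Lambda_N^4|c_N|\det\mathcal{M}(p)$, while passing to the antipode changes the sign of the whole $3\times 3$ matrix, so $\det D^2\gamma_N(\lambda^*+\pi,0,0)=-\Lambda_N^4|c_N|\det\mathcal{M}(p)$. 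Since $c_N\neq 0$ and $\det\mathcal{M}(p)\neq 0$, both critical points are non-degenerate.

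\textbf{Step 2: continue the solutions.} Applying Proposition~\ref{exis} at each of the two points produces, for $\ep$ small, $C^1$ families of $2\pi$-periodic solutions of \eqref{hp} equal at $\ep=0$ to the respective equatorial solution. Composing with the analytic symplectic diffeomorphism $\mathcal{P}$ of Proposition~\ref{analytic} turns these into $C^1$ families $x(t,\ep)$, $\tilde x(t,\ep)$ of $2\pi$-periodic, collision-free (because $\mathcal{P}$ takes values in $\mathcal{E}_+$) solutions of \eqref{2}; their $\ep=0$ terms are the circular orbits of \eqref{k} given by \eqref{e0} with $\Lambda=\Lambda_N$ and initial angles $\lambda^*$ and $\lambda^*+\pi$, which give the displayed leading terms, and $C^1$ dependence on $(t,\ep)$ over the compact interval $[0,2\pi]$ yields the uniform $O(\ep)$ remainders. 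By the first remark after Proposition~\ref{exis} both families have winding number $N$.

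\textbf{Step 3: read off stability.} At $(\lambda^*,0,0)$ one has $\partial^2_{\lambda\lambda}\gamma_N=\Lambda_N^2|c_N|>0$ for every $p$, whereas $\det D^2\gamma_N$ carries the sign of $\det\mathcal{M}(p)$; Theorem~\ref{st} then gives that $x(t,\ep)$ is elliptic when $\det\mathcal{M}(p)>0$ and Lyapunov unstable when $\det\mathcal{M}(p)<0$. At $(\lambda^*+\pi,0,0)$ one has $\partial^2_{\lambda\lambda}\gamma_N=-\Lambda_N^2|c_N|<0$ for every $p$, so the instability clause of Theorem~\ref{st} forces $\tilde x(t,\ep)$ to be Lyapunov unstable in all cases. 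Assembling the two cases is exactly the claimed dichotomy. No step here is genuinely hard once the Hessian is in hand; the only points that need care are the sign change under the antipodal symmetry $D^2\gamma_N(\lambda^*+\pi,0,0)=-D^2\gamma_N(\lambda^*,0,0)$ and the observation that at the antipodal point it is the sign of $\partial^2_{\lambda\lambda}\gamma_N$, not of the determinant, that decides the outcome.
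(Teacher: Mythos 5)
Your proposal is correct and follows essentially the same route as the paper: the paper's own justification consists precisely of the critical-point and Hessian computations preceding the statement, followed by the remark that Proposition~\ref{exis} and Theorem~\ref{st} apply at the antipodal pair $(\lambda^*,0,0)$, $(\lambda^*+\pi,0,0)$. Your sign bookkeeping (positivity of $\partial^2_{\lambda\lambda}\gamma_N$ at $\lambda^*$, the antipodal sign flip, and $\det D^2\gamma_N=\Lambda_N^4|c_N|\det\mathcal{M}(p)$) matches what the paper leaves implicit.
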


The example in the Introduction corresponds to $N=1$, $\lambda^* = 0$ and 
$$
\mathcal{M}(p) = \left( \begin{array}{cc}
\vspace{0.1cm}
1 + \frac{1}{4} \Re(a) & 
\frac{1}{4} \Im (a) \\
* &  1 -\frac{1}{4} \Re (a)
\end{array}\right).
$$

It is also possible to obtain some more or less explicit conditions in the non-circular case. The computations are more involved and we have been unable to find simple conditions like in Proposition \ref{expli}. 
It seems that the classical Fourier expansion solving the Kepler equation $u-e\sin u =t$ plays a role when $p(t)$ is a trigonometric polynomial.

\section{Appendix}

\begin{proof}[Proof of Proposition \ref{analytic}]
The main point in the proof is to show that the function $\mathcal{P}= \mathcal{P}(\lambda,\Lambda;\eta,\xi)$ is real analytic at each point
$(\lambda,\Lambda;0,0) \in \Omega \setminus \Omega^*$. If this is given for granted the proof follows easily. We already know that $\mathcal{P}$ is 
a symplectic diffeomorphism between $\Omega^*$ and $\mathcal{E}_+ \setminus \mathcal{C}$ and a homeomorphism from $\Omega$ onto $\mathcal{E}_+$.
Therefore it is enough to prove that $\mathcal{P}$ is a local symplectic diffeomorphism around each point on $\Omega \setminus \Omega^*$. To this end we invoke the identity \eqref{sym}. In principle it is valid on $\Omega^*$ but the continuity of the derivatives of $\mathcal{P}$ implies that it also holds on $\Omega$. In consequence $\textnormal{det}\,\mathcal{P}' = 1$ on $\Omega$ and the inverse function theorem can be applied at each point $(\lambda,\Lambda;0,0)$.

Let us now concentrate on the analiticity of the coordinates of $\mathcal{P}$. We start with some auxiliary functions. By direct computations the eccentricity can be expressed in terms of Poincar\'e coordinates,
$$
e = \sqrt{\eta^2 + \xi^2} \,\sqrt{\frac{1}{\Lambda}-\frac{\eta^2 + \xi^2}{4\Lambda^2}}.
$$
This implies that $e^2$ is analytic in $(\Lambda,\eta,\xi)$. The same can be said for 
$\gamma:= \tfrac{e}{\sqrt{\eta^2 + \xi^2}}$. Define $A:= e \cos h = \gamma \xi$,
$B:= e \sin h= \gamma\eta$. They are also analytic and 
$A(\Lambda,0,0)=B(\Lambda,0,0) = 0$. Define now $\alpha:=e \cos l$,
$\beta:=e\sin l$. Using $\lambda = l + g$ we can express them in terms of $A,B,\cos \lambda,\sin\lambda$. This implies that $\alpha$ and $\beta$ are analytic functions in the variables $(\lambda,\Lambda;\eta,\xi)$ and
$\alpha(\lambda,\Lambda;0,0) = \beta(\lambda,\Lambda;0,0) = 0$.

Kepler's equation $u - e \sin u = l$ can be rewritten as
$$
u-l = \alpha \sin(u-l) + \beta \cos (u-l)
$$
and the implicit function theorem can be applied to deduce that $u-l = e\sin u$ is analytic in
$(\lambda,\Lambda;\eta ,\xi )$. Also $e \cos u = \alpha \cos (u-l) - \beta \sin(u-l)$ is analytic.
From $u+g = u-l+\lambda$ we deduce that $\cos(u+g)$ and $\sin(u+g)$ are analytic. Finally we observe that $e^2 \cos (u-g)$ and  $e^2 \sin (u-g)$ are analytic. This follows from
$u-g = u+l-\lambda$ and
$$
e^2 \cos(u+l) = (\alpha^2-\beta^2)\cos(u-l) - 2\alpha\beta\sin(u-l),
$$
$$
e^2 \sin (u+l) = (\alpha^2-\beta^2)\sin(u-l) + 2\alpha\beta\cos (u-l).
$$
To complete the proof we express the coordinates of $\mathcal{P}$ in terms of the  previous functions. In particular, 
for $x_1$ and $y_1$,
$$
\frac{x_1}{\Lambda^2} = \frac{1+\sqrt{1-e^2}}{2}\cos(u+g) + \frac{1}{2(1+\sqrt{1-e^2})}e^2 \cos(u-g) - A
$$
and
$$
\Lambda y_1 = \frac{1}{1-e\cos u}\left[ - \frac{1}{2(1+\sqrt{1-e^2})}e^2\sin(u-g) - \frac{1+\sqrt{1-e^2}}{2}\sin(u+g)\right].
$$
\end{proof}

We shall now explain how to obtain the formulas presented at the end of Section \ref{s3}. Given $\mathcal{P} = (x,y)^t$ and
$x = x(\lambda,\Lambda,\eta,\xi)$, $x=(x_1,x_2)^t$, we deduce from the definition of the Poincar\'e coordinates that
$$
x(\lambda,\Lambda,0,0) = \Lambda^2 \left( \begin{array}{l}
\vspace{0.2cm}
\cos\lambda \\
\sin \lambda 
\end{array}\right).
$$
The derivatives $\frac{\partial x}{\partial \lambda}$ and $\frac{\partial^2 x}{\partial \lambda^2}$
can be computed from here. To compute the remaining derivatives we consider the change of variables
$r^2 = 2H$, $\eta = r \sin h$, $\xi = r \cos h$  and observe that the functions
$e = e(\Lambda,r)$ and $u=u(\lambda,\Lambda,r,h)$ are analytic at $r = 0$.
This is a consequence of the formulas
$$
e = r \sqrt{\frac{1}{\Lambda} - \frac{r^2}{4\Lambda^2}}, \qquad u - e \sin u = \lambda + h.
$$
In consequence the function
\begin{equation}\label{ft}
\tilde x(\lambda,\Lambda,r,h) = \Lambda^2 R[-h] \left( \begin{array}{l}
\vspace{0.2cm}
\cos u - e \\
\sqrt{1-e^2}\sin u 
\end{array}\right)
\end{equation}
is analytic at $r = 0$, $\Lambda >  0$, $\lambda,h \in \mathbb{T}$.

Since $x(\lambda,\Lambda,\eta,\xi)= \tilde x(\lambda,\Lambda,r,h)$ we can differentiate at each point with $r > 0$ to obtain
$$
\frac{\partial \tilde x}{\partial r} = \frac{\partial x}{\partial\eta}\sin h + \frac{\partial x}{\partial\xi} \cos h 
$$
and
$$
\frac{\partial^2 \tilde x}{\partial r^2} = \frac{\partial^2 x}{\partial\eta^2}\sin^2 h + 
2 \frac{\partial^2 x }{\partial \eta \partial \xi} \sin h \cos h +
\frac{\partial^2 x}{\partial\xi^2} \cos^2 h.
$$
Letting $r \to 0^+$ and selecting appropriate values of $h$ we express the derivatives of $x$ with respect to $\eta$ and $\xi$ in terms of
$\frac{\partial\tilde x}{\partial r}$ and $\frac{\partial^2\tilde x}{\partial r^2}$.
For instance, the choice $h = \frac{\pi}{2}$ leads to the identities
\begin{equation}\label{ide1}
\frac{\partial x}{\partial \eta}(\lambda,\Lambda,0,0) = \frac{\partial \tilde x}{\partial r}\left(\lambda,\Lambda,0,\frac{\pi}{2}\right)
\end{equation}
and
\begin{equation}\label{ide2}
\frac{\partial^2 x}{\partial \eta^2}(\lambda,\Lambda,0,0) = \frac{\partial^2 \tilde x}{\partial r^2}\left(\lambda,\Lambda,0,\frac{\pi}{2}\right).
\end{equation}
Similar identities for the derivatives with respect to $\xi$ are obtained with the choice $h = 0$. Finally the identity
\begin{equation}\label{ide3}
\frac{\partial^2 x}{\partial \eta\partial\xi}(\lambda,\Lambda,0,0) = \frac{\partial^2 \tilde x}{\partial r^2}\left(\lambda,\Lambda,0,\frac{\pi}{4}\right)
-\frac{1}{2}\left[\frac{\partial^2 x}{\partial \eta^2}(\lambda,\Lambda,0,0) + \frac{\partial^2 x}{\partial \xi^2}(\lambda,\Lambda,0,0)\right]
\end{equation}
is obtained for $h = \frac{\pi}{4}$.

Our next task is to compute $\frac{\partial\tilde x}{\partial r}$ and $\frac{\partial^2\tilde x}{\partial r^2}$ at $r = 0$.
Differentiating \eqref{ft} with respect to $r$ and evaluating at $r=0$ we obtain
\begin{equation}\label{fora}
\frac{\partial \tilde x}{\partial r}(\lambda,\Lambda,0,h) = \frac{1}{2}\Lambda^{3/2} R[-h] \left( \begin{array}{l}
\vspace{0.2cm}
-3 + \cos 2(\lambda+h) \\
\sin 2(\lambda+h) 
\end{array}\right),
\end{equation}
where we have employed
$$
e(\Lambda,0) = 0, \qquad \frac{\partial e}{\partial r}(\Lambda,0) = \sqrt{\frac{1}{\Lambda}},
$$
$$
u(\lambda,\Lambda,0,h) = \lambda + h, \qquad \frac{\partial u}{\partial r}(\lambda,\Lambda,0,h) = \sqrt{\frac{1}{\Lambda}} \sin(\lambda+h)
$$
together with the trigonometric identity $1 + \sin^2 \theta = \tfrac{3-\cos 2\theta}{2}$.

Combining \eqref{fora} with \eqref{ide1} we compute $\frac{\partial x}{\partial \eta}$
and $\frac{\partial x}{\partial \xi}$ is computed in a similar way. Differentiating with respect to $\lambda$ the identities
$$
\frac{\partial x}{\partial \eta}(\lambda,\Lambda,0,0) = \frac{\Lambda^{3/2}}{2}\left( 3i+ie^{2i\lambda}\right), \qquad
\frac{\partial x}{\partial \xi}(\lambda,\Lambda,0,0) = \frac{\Lambda^{3/2}}{2}\left( -3+e^{2i\lambda}\right)
$$
we compute $\frac{\partial^2 x}{\partial\lambda\partial\eta}$ and $\frac{\partial^2 x}{\partial\lambda\partial\xi}$. By now the only derivatives that have not been computed are $\frac{\partial^2 x}{\partial\eta^2}$, $\frac{\partial^2 x}{\partial\xi^2}$ and $\frac{\partial^2 x}{\partial\eta\partial\xi}$.
They will follow from \eqref{ide2}, \eqref{ide3} and the computation of $\frac{\partial^2 \tilde x}{\partial r^2}$. To this end we differentiate twice the identity \eqref{ft} with respect to $r$ and evaluate at $r=0$ to obtain
$$
\frac{\partial^2 \tilde x}{\partial r^2}(\lambda,\Lambda,0,h) = \Lambda R[-h] \left( \begin{array}{l}
\vspace{0.2cm}
-3 \sin^2(\lambda+h)\cos (\lambda+h) \\
-\sin (\lambda+h)-\sin^3(\lambda+h)+2\cos^2(\lambda+h)\sin(\lambda+h) 
\end{array}\right),
$$
where we are using
$$
\frac{\partial^2 e}{\partial r^2}(\Lambda,0) = 0 \quad \mbox{ and } \quad
\frac{\partial^2 u}{\partial r^2}(\lambda,\Lambda,0,h) = \frac{2\sin(\lambda+h)\cos(\lambda+h)}{\Lambda}.
$$
From the trigonometric identities
$$
\sin^2\theta \cos\theta = \frac{1}{4}(\cos\theta - \cos 3\theta), \quad
-\sin\theta-\sin^3\theta + 2 \cos^2\theta\sin\theta = \frac{-5\sin\theta + 3 \sin3\theta}{4}
$$
we express $\frac{\partial^2 \tilde x}{\partial r^2}$ in terms of 
$e^{i(\lambda+h)}$ and $e^{3i(\lambda+h)}$, precisely
$$
\frac{\partial^2 \tilde x}{\partial r^2}(\lambda,\Lambda,0,h) = \Lambda e^{-ih}
\left(-e^{i(\lambda+h)} + \frac{1}{4}\left( e^{-i(\lambda+h)} + 3 e^{3i(\lambda+h)}\right) \right).
$$
The derivatives $\frac{\partial^2 x}{\partial\eta^2}$ and $\frac{\partial^2 x}{\partial\xi^2}$ 
are computed evaluating $\frac{\partial^2 \tilde x}{\partial r^2}$ for $h=\tfrac{\pi}{2}$ and
$h= 0$. Finally we compute $\frac{\partial^2 x}{\partial\eta\partial\xi}$
by an evaluation of $\frac{\partial^2 \tilde x}{\partial r^2}$ at $h= \frac{\pi}{4}$
combined with \eqref{ide3}.

\subsection*{Acknowledgments}

The second author (R.O.) would like to express his thanks to Dr. Lei Zhao for introducing him to the Poincar\'e coordinates and also for discussions on some chapters of the always surprising book \cite{Poi92}.
\smallbreak
Work partially supported by the 
ERC Advanced Grant 2013 n. 339958
``Complex Patterns for Strongly Interacting Dynamical Systems - COMPAT'' and by the GNAMPA Project 2015 ``Equazioni Differenziali
Ordinarie sulla retta reale'' (A.B.) and by the project MTM2014-52232-P Spain (R.O.).

\vspace{1 cm}
\normalsize

Authors' addresses:
\bigbreak
\medbreak
\indent Alberto Boscaggin \\
\indent Dipartimento di Matematica, Universit\`a di Torino, \\
\indent Via Carlo Alberto 10, I-10123 Torino, Italy \\
\indent e-mail: alberto.boscaggin@unito.it \\

\medbreak
\indent Rafael Ortega\\
\indent Departamento de Matem\'atica Aplicada, Universidad de Granada,\\
\indent E-18071 Granada, Spain\\
\indent e-mail: rortega@ugr.es \\


\begin{thebibliography}{99}

\footnotesize

\bibitem{AmbCot89}
{A. Ambrosetti, V. Coti Zelati, \emph{Perturbation of Hamiltonian systems with Keplerian potentials},
Math. Z. \textbf{201} (1989), 227--242.}

\bibitem{AmbCotEke87}
{A. Ambrosetti, V. Coti Zelati, I. Ekeland, \emph{Symmetry breaking in Hamiltonian systems},
J. Differential Equations \textbf{67} (1987), 165--184.}


\bibitem{Fej13}
{J. F{\'e}joz, \emph{On action-angle coordinates and the {P}oincar\'e coordinates},
Regul. Chaotic Dyn. \textbf{18} (2013), 703--718.}

\bibitem{Fa28}
{P. Fatou, \emph{Sur le mouvement d'un syst\`eme soumis \`a des forces \`a courte p\'eriode},
Bull. Soc. Math. France. \textbf{56} (1928), 98--139.}



\bibitem{MeyHalOff}
{K.R. Meyer, G.R. Hall, D. Offin, Introduction to Hamiltonian Dynamical Systems and the $N$-Body Problem}, Springer 2009.

\bibitem{Mos70}
{J. Moser, \emph{Regularization of Kepler's problem and the averaging method on a manifold},
Comm. Pure Appl. Math. \textbf{23} (1970), 609--636.}

\bibitem{Poi05}
{H. Poincar\'e, Le\c{c}ons de M\'ecanique C\'eleste, Tome I (Th\'eorie g\'en\'erale des perturbations plan\'etaires)}, Gauthier-Villars 1905. 

\bibitem{Poi92}
{H. Poincar\'e, Les M\'ethodes Nouvelles de la M\'ecanique C\'eleste, Tome
I (Solutions p\'eriodiques, Non-existence des
              int\'egrales uniformes, Solutions asymptotiques)}, Gauthier-Villars 1892.
              
\bibitem{Yan08}
{P. Yanguas, J.F. Palaci\'an, K.R. Meyer, H. Scott-Dumas, \emph{Periodic solutions in Hamiltonian systems, averaging, and the Lunar Problem},
SIAM J. App. Dyn. Sys. \textbf{7} (2008), 311--340.}

\end{thebibliography}
\end{document}